\newcommand{\Le}{\mathcal{L}}
\DeclareMathOperator{\supp}{supp}
\newcommand{\Prob}{\mathbb{P}}
\newcommand{\E}{\mathbb{E}}
\renewcommand\Re{\operatorname{Re}}
\newcommand{\eps}{\varepsilon}
\renewcommand{\d}{\, d }
\theoremstyle{plain}
  \newtheorem{theorem}{Theorem}[section]
  \newtheorem{lemma}[theorem]{Lemma}
  \newtheorem{proposition}[theorem]{Proposition}
\theoremstyle{definition}
  \newtheorem{definition}[theorem]{Definition}
  \newtheorem{example}[theorem]{Example}
  \newtheorem{assumption}[theorem]{Assumption}
  \newtheorem{question}[theorem]{Question}
\theoremstyle{remark}
  \newtheorem{remark}[theorem]{Remark}
\begin{document}

\title{Sums of random polynomials with independent roots}

\author{Sean O'Rourke}
\address{Department of Mathematics\\ University of Colorado\\ Campus Box 395\\ Boulder, CO 80309-0395\\USA}
\email{sean.d.orourke@colorado.edu}
\thanks{S. O'Rourke has been supported in part by NSF grants ECCS-1610003 and DMS-1810500.}

\author{Tulasi Ram Reddy}
\address{New York University Abu Dhabi \\ United Arab Emirates}
\email{tulasi@nyu.edu}

\begin{abstract}
We consider the zeros of the sum of independent random polynomials as their degrees tend to infinity.  Namely, let $p$ and $q$ be two independent random polynomials of degree $n$, whose roots are chosen independently from the probability measures $\mu$ and $\nu$ in the complex plane, respectively.  We compute the limiting distribution for the zeros of the sum $p+q$ as $n$ tends to infinity.  The limiting distribution can be described by its logarithmic potential, which we show is the pointwise maximum of the logarithmic potentials of $\mu$ and $\nu$.  More generally, we consider the sum of $m$ independent degree $n$ random polynomials when $m$ is fixed and $n$ tends to infinity.  Our results can be viewed as describing a version of the free additive convolution from free probability theory for zeros of polynomials.  
\end{abstract}

\keywords{random polynomials, logarithmic potential, zeros of sums of polynomials, anti-concentration}

\maketitle

\section{Introduction} 

Let $p$ and $q$ be monic polynomials in a single complex variable.  
This paper is concerned with the following natural question.  
\begin{question} \label{question:main}
Given the individual roots of $p$ and $q$, what are the roots of $p+q$?  
\end{question}
There has been considerable interest in the location of zeros of linear combinations of polynomials; we refer the reader to \cite{Fran,Fisk,SHK,SHK2,M,Pinter,Sodin,Ver,Walsh,Zedek} and references therein.  
The goal of this note is to address a probabilistic version of Question \ref{question:main}.  We focus on the model where $p$ and $q$ are independent random polynomials of the same degree, with roots chosen independently from the probability measures $\mu$ and $\nu$ in the complex plane, respectively.  Models of random polynomials with independent roots have been studied (mostly in relation to their critical points) in \cite{BLR,Hanin,Kab,OW2,OW,PR,RR,Sub} and references therein.  

\subsection{Main results}
Let $\mathcal{P}(\mathbb{C})$ be the set of probability measures on $\mathbb{C}$ with compact support.  The \emph{logarithmic potential} $U_\mu$ of $\mu \in \mathcal{P}(\mathbb{C})$ is the function $U_\mu: \mathbb{C} \to [-\infty, +\infty)$ defined for all $z \in \mathbb{C}$ by
\[ U_\mu(z) := \int_{\mathbb{C}} \log |z - w| \d \mu(w). \]

Let $\lambda$ denote Lebesgue measure on $\mathbb{C}$.  For a measure $\mu \in \mathcal{P}(\mathbb{C})$, we let $\supp(\mu) \subset \mathbb{C}$ denote the support of $\mu$.

\begin{definition} \label{def:circle}
Let $K \subset \mathbb{C}$ be nonempty.  We say $\mu \in \mathcal{P}(\mathbb{C})$ is \emph{supported on a circle centered in $K$} if there exist $r \geq 0$ and $z \in K$ so that 
\[ \supp(\mu) \subset \{w \in \mathbb{C} : |w - z| = r\}. \]
If not, we say \emph{$\mu$ is not supported on a circle centered in $K$}.  
If $\mu$ is supported on a circle centered in $\mathbb{C}$, we simply say \emph{$\mu$ is supported on a circle}, and when $\mu$ is not supported on a circle centered in $\mathbb{C}$, we say \emph{$\mu$ is not supported on a circle}.  
\end{definition}

For our main results, we will be interested in probability measures which are \emph{not} supported on circles.  For instance, if $\mu \in \mathcal{P}(\mathbb{C})$ is absolutely continuous with respect to the Lebesgue measure $\lambda$, then $\mu$ is not supported on a circle.  

Let $C^\infty_c(\mathbb{C})$ denote the set of all smooth functions $\varphi: \mathbb{C} \to \mathbb{C}$ with compact support.  Our main result is the following.

\begin{theorem} \label{thm:mainsimple2}
Let $\mu, \nu \in \mathcal{P}(\mathbb{C})$, and assume $\mu$ is not supported on a circle.  For each $n \geq 1$, define the degree $n$ polynomials 
\[ p_n(z) := \prod_{i=1}^n (z - X_i), \qquad q_n(z) := \prod_{i=1}^n (z - Y_i), \]
where $X_1, Y_1, X_2, Y_2, \ldots$ are independent random variables so that $X_i$ has distribution $\mu$ and $Y_i$ has distribution $\nu$ for each $i \geq 1$.  
Then there exists a (deterministic) probability measure $\rho$ on $\mathbb{C}$ so that, for any smooth and compactly supported function $\varphi: \mathbb{C} \to \mathbb{C}$, 
\[ \frac{1}{n} \sum_{i=1}^n \varphi(z_i^{(n)} ) \longrightarrow \int_{\mathbb{C}} \varphi \d \rho \]
in probability as $n \to \infty$, where $z_1^{(n)}, \ldots, z_n^{(n)}$ are the zeros of the sum $p_n + q_n$.  Here, $\rho$ depends only on $\mu$ and $\nu$ and is uniquely defined by the condition that 
\begin{equation} \label{eq:defrho}
	\int_{\mathbb{C}} \varphi \d \rho = \frac{1}{2 \pi} \int_{\mathbb{C}} \Delta \varphi(z) \left( \max \left\{ U_{\mu}(z), U_{\nu}(z) \right\} \right) d \lambda(z) \quad \text{ for all } \varphi \in C^\infty_c(\mathbb{C}).
\end{equation}  
\end{theorem}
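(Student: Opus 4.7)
The plan is to reformulate everything in terms of logarithmic potentials. Writing $\mu_n := \frac{1}{n}\sum_{i=1}^n \delta_{X_i}$ and $\nu_n := \frac{1}{n}\sum_{i=1}^n \delta_{Y_i}$, one has $U_{\mu_n}(z) = \frac{1}{n}\log|p_n(z)|$ and $U_{\nu_n}(z) = \frac{1}{n}\log|q_n(z)|$. Let $\rho_n := \frac{1}{n}\sum_{i=1}^n \delta_{z_i^{(n)}}$; since $p_n+q_n$ is monic of degree $n$, its logarithmic potential is $U_{\rho_n}(z) = \frac{1}{n}\log|p_n(z)+q_n(z)|$. Green's identity gives, for every $\varphi \in C_c^\infty(\mathbb{C})$,
\[
\frac{1}{n}\sum_{i=1}^n \varphi(z_i^{(n)}) \;=\; \frac{1}{2\pi}\int_{\mathbb{C}} \Delta\varphi(z)\, U_{\rho_n}(z)\, d\lambda(z),
\]
so the theorem reduces to showing that $U_{\rho_n}(z) \to \max(U_\mu(z), U_\nu(z))$ in $L^1_{\mathrm{loc}}(\mathbb{C})$ in probability.

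For the pointwise upper bound, the triangle inequality gives $U_{\rho_n}(z) \leq \max(U_{\mu_n}(z), U_{\nu_n}(z)) + \tfrac{\log 2}{n}$, and for each fixed $z$ outside the supports the law of large numbers applied to the i.i.d.\ random variables $\log|z-X_i|$ and $\log|z-Y_i|$ yields $U_{\mu_n}(z) \to U_\mu(z)$ and $U_{\nu_n}(z) \to U_\nu(z)$ almost surely. The real difficulty is the matching pointwise lower bound. When $U_\mu(z) \neq U_\nu(z)$, the larger of $|p_n(z)|, |q_n(z)|$ exponentially dominates the other and $|p_n(z)+q_n(z)|$ is asymptotic to it. The obstacle lies at points where $U_\mu(z) = U_\nu(z)$: there $|p_n(z)|$ and $|q_n(z)|$ are of the same exponential order and could a priori nearly cancel. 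One needs an \emph{anti-concentration} estimate of the form
\[
\Prob\bigl(\,|p_n(z)+q_n(z)| \leq e^{-o(n)} \max(|p_n(z)|,|q_n(z)|)\,\bigr) \longrightarrow 0
\]
for Lebesgue-a.e.\ such $z$. I expect this to be the main technical obstacle of the proof. The hypothesis that $\mu$ is not supported on a circle enters here: it prevents $\log(z-X_1)$ from being a.s.\ of the form $\log r + i\theta_0$ (which would make $p_n(z)$ deterministic up to a known phase), and should supply enough spread in the distribution of the ratio $\log(p_n(z)/q_n(z))$ to rule out near-cancellation. Quantitatively, I would seek to prove a Berry--Esseen or Littlewood--Offord type estimate for the complex random walk $\sum_i \bigl(\log(z-X_i) - \log(z-Y_i)\bigr)$, showing that its distribution does not concentrate in any ball of radius $o(\sqrt{n})$ around the origin.

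Once pointwise convergence in probability is established on a full Lebesgue-measure set, the upgrade to $L^1_{\mathrm{loc}}$ convergence follows from a uniform integrability argument. I would first show that with high probability all zeros of $p_n+q_n$ lie in a fixed compact set $K$ (via Cauchy's bound applied to its coefficients, which concentrate since the $X_i,Y_i$ have compact support). On a fixed compact set, $|U_{\rho_n}|$ is dominated by $\frac{1}{n}\sum_i |\log|z-z_i^{(n)}||$, and one needs a local $L^{1+\eta}$ bound uniform in $n$; the upper tail is controlled by the diameter of $K$, while the lower tail is controlled by the anti-concentration step above applied uniformly. Combined with the pointwise convergence, Vitali's theorem yields $L^1_{\mathrm{loc}}$ convergence. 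Finally, $\rho := \frac{1}{2\pi}\Delta \max(U_\mu, U_\nu)$ in the distributional sense is identified as the limit, and it is a probability measure because $\max(U_\mu, U_\nu)(z) = \log|z| + o(1)$ as $|z|\to\infty$, so it is the logarithmic potential of a unit positive measure.
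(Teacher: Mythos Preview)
Your overall architecture matches the paper's: reduce to convergence of $U_{\rho_n}$ in $L^1_{\mathrm{loc}}$, establish pointwise convergence via the law of large numbers, isolate the anti-concentration issue as the crux, and upgrade via uniform integrability. However, there are two points where your execution diverges from the paper and one of them is a genuine error.

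\textbf{Anti-concentration: simpler than you propose.} You frame the problem as ruling out near-cancellation $p_n(z)+q_n(z)\approx 0$, and you propose a Berry--Esseen/Littlewood--Offord estimate for the \emph{complex} walk $\sum_i(\log(z-X_i)-\log(z-Y_i))$. The paper avoids this entirely by working only with moduli. The key observation is that if $|p_n(z)/q_n(z)|\notin[1/2,2]$, then $|p_n(z)+q_n(z)|\geq \tfrac12\max(|p_n(z)|,|q_n(z)|)$ regardless of phase, so no cancellation is possible. Thus it suffices to show that for $Z$ uniform on $\supp(\varphi)$,
\[
\Prob\Bigl(\tfrac12\leq |p_n(Z)/q_n(Z)|\leq 2\Bigr)=O(n^{-1/2}).
\]
Conditioning on $Z=z$ and on $q_n$, this is a L\'evy-concentration bound for the \emph{real} sum $\log|p_n(z)|=\sum_i\log|z-X_i|$, which follows from the Kolmogorov--Rogozin inequality once $\log|z-X_1|$ is nondegenerate. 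That nondegeneracy is precisely the statement that $\mu$ is not supported on a circle centered at $z$. (Your reading of the hypothesis as preventing ``$\log(z-X_1)$ from being a.s.\ of the form $\log r+i\theta_0$'' is slightly off: only the modulus $|z-X_1|$ need not be a.s.\ constant; the argument can be as random as it likes.) The paper then integrates over $z$ via a Monte Carlo sampling lemma rather than arguing pointwise a.e.; this sidesteps any separate analysis of the set $\{U_\mu=U_\nu\}$.

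\textbf{A genuine gap: zeros need not stay in a fixed compact set.} Your uniform-integrability step asserts that with high probability all zeros of $p_n+q_n$ lie in a fixed compact $K$. This is false in general: in the paper's Example~1.4 the limiting measure $\rho$ is the Cauchy distribution on the imaginary axis, which is not compactly supported, so the zeros must escape to infinity. (Also, $p_n+q_n$ has leading coefficient $2$, not $1$, though this is harmless.) What is true, via Walsh's coincidence theorem, is that the zeros lie in $B(Cn)$; this gives only $\frac{1}{n}\log|z-z_i^{(n)}|=O(\tfrac{\log n}{n})$ for the far-away zeros, so the $L^2$ bound on $U_{\rho_n}$ over $\supp(\varphi)$ is $O(\log^2 n)$ rather than $O(1)$. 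The paper absorbs this loss by choosing the number of Monte Carlo samples to grow like $(\log n)^{5/2}$. Your Vitali argument can be salvaged along the same lines, but not as written.
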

  
\begin{remark} \label{rem:distribution}
Condition \eqref{eq:defrho}, which uniquely defines the measure $\rho$, can be succinctly written as
\[ \rho = \frac{1}{ 2 \pi} \Delta U \]
where $U(z) := \max \{ U_{\mu}(z), U_{\nu}(z) \}$ for each $z \in \mathbb{C}$ and where the Laplacian is interpreted in the distributional sense (see Section 3.7 in \cite{Ransford}).  
\end{remark}

\begin{remark}
The conclusion of Theorem \ref{thm:mainsimple2} fails to hold for some deterministic polynomials. For instance, consider the case when $p_n(z):=z^{n-\lfloor\frac{ n }{2}\rfloor}(z^{\lfloor\frac{ n }{2}\rfloor}-1)$ and $q_n(z):=z^{n-\lfloor\frac{ n }{2}\rfloor}(z^{\lfloor\frac{ n }{2}\rfloor}+1)$. The empirical distributions for the zeros of both $p_n$ and $q_n$ converge to a probability measure which is a mixture of the uniform measure on the unit circle and a point mass at the origin, where as the zeros of $p_n(z)+q_n(z)=2z^n$ are all located at the origin. 
\end{remark}

We emphasize that Theorem \ref{thm:mainsimple2} only requires $\mu$ to not be supported on a circle; $\nu$ may be an arbitrary compactly supported probability measure on $\mathbb{C}$.
This assumption on the support of $\mu$, however, is likely an artifact of our proof methods.  In Theorem \ref{thm:mainsimple} below, we give an alternative formulation that can be applied to measures supported on circles.  
For technical simplicity, we have focused on measures with compact support, but we anticipate that our main results should also hold for more general probability measures.

\begin{example} \label{example:disks}
Suppose $\mu$ is the uniform probability measure on the unit disk in the complex plane centered at $1$, and let $\nu$ be the uniform probability measure on the unit disk centered at $-1$.  Then for every $z \in \mathbb{C}$, 
\[ U_{\mu}(z) = \begin{cases} 
      \log |z-1|, & \text{ if } |z-1| > 1, \\
      \frac{1}{2} ( |z-1|^2 - 1), & \text{ if } |z-1| \leq 1,
   \end{cases} \]
and
\[ U_{\nu}(z) = \begin{cases} 
      \log |z+1|, & \text{ if } |z+1| > 1, \\
      \frac{1}{2} ( |z+1|^2 - 1), & \text{ if } |z+1| \leq 1,
   \end{cases} \]
see for instance \cite{ST}.  It follows that
\[ U(z) := \max\{ U_{\mu}(z), U_{\nu}(z) \} = \begin{cases} 
      \log |z-1|, & \text{ if } \Re(z) \leq 0, \\
      \log |z+1|, & \text{ if } \Re(z) > 0. 
   \end{cases} \]
In this case, an integration by parts argument shows that
\[ \frac{1}{2 \pi} \int_{\mathbb{C}} \Delta \varphi(z) U(z) d \lambda(z) = \frac{1}{\pi} \int_{\mathbb{R}} \varphi(iy) \frac{1}{1 + y^2} dy \]
for any $\varphi \in C_c^\infty(\mathbb{C})$.  Thus, \eqref{eq:defrho} shows that $\rho$ is the probability measure supported on the imaginary axis having the Cauchy distribution.  In this setting Theorem \ref{thm:mainsimple2} implies that, for any smooth and compactly supported function $\varphi:\mathbb{C} \to \mathbb{C}$, 
\[ \frac{1}{n} \sum_{i=1}^n \varphi(z_i^{(n)}) \longrightarrow \frac{1}{\pi} \int_{\mathbb{R}} \varphi(i y) \frac{1}{1 + y^2} dy \]
in probability as $n \to \infty$.  A numerical simulation of this example is presented in Figure \ref{fig:disks}.  
\end{example}

\begin{figure}[t] 
\centering
\includegraphics[width=0.5\textwidth]{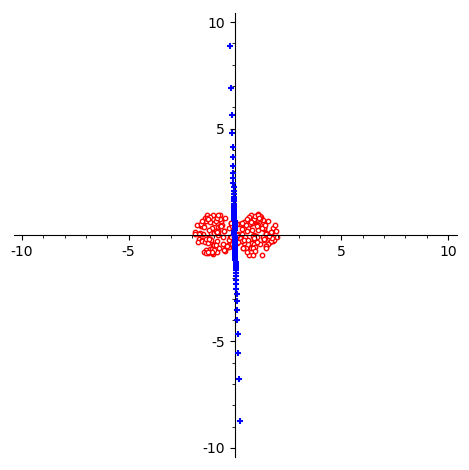}
\caption{A numerical simulation of Example \ref{example:disks}.  The red circles represent the roots of the individual polynomials (uniform on the unit disks centered at $1$ and $-1$, respectively), each having degree $n=100$, and the blue crosses are the zeros of the sum.}
\label{fig:disks}
\end{figure}

\begin{remark}
	In the above example one may replace the measures $\mu$ and $\nu$ by arbitrary radial probability measures centered at $1$ and $-1$ respectively and contained in their respective half planes. The resulting measure would still be the standard Cauchy measure on the imaginary axis.  A particular case is given by point masses at $1$ and $-1$. Then the corresponding sequences of polynomials are given by $\{(z-1)^n\}_{n\geq1}$ and $\{(z+1)^n\}_{n\geq1}$. The zero set of the sum $(z-1)^n+(z+1)^n$ is $\left\{-i\cot \left(\frac{(2k+1)\pi}{2n} \right) : 1\leq k \leq n \right\}$. Notice that the zeros are purely imaginary and the frequency on any interval is approximated by the standard Cauchy distribution on the imaginary axis. 
\end{remark}

\begin{remark}
Let $\mu:=\alpha \lambda_1 +(1-\alpha) \lambda_2 $ and $\nu:=\alpha \lambda_1 +(1-\alpha)\lambda_3$, where $0 \leq \alpha \leq 1$ and $\lambda_1, \lambda_2, \lambda_3$ are probability measures on $\mathbb{C}$ not supported on a circle. Then $\max \{U_\mu,U_\nu \}$ is computed to be $\alpha U_{\lambda_1}+(1-\alpha) \max \{ U_{\lambda_2},U_{\lambda_3} \}$.  Therefore, the resultant measure $\rho$ from Theorem \ref{thm:mainsimple2} is given by $\alpha \lambda_1 + (1-\alpha)\frac{1}{2\pi}\Delta \max \{ U_{\lambda_2},U_{\lambda_3} \}$ (see Theorem 3.7.4 in \cite{Ransford}). This result aligns with the case when the polynomials $p_n$ and $q_n$ have $\lfloor n\alpha \rfloor$ common factors, and the empirical distribution of common roots converges to $\lambda_1$.
\end{remark}

We generalize Theorem \ref{thm:mainsimple2} in two ways.  First, we consider the sum of more than two polynomials, and second, we consider more general probability measures.    

\begin{assumption} \label{assump:main}
Let $K \subset \mathbb{C}$ be compact.  We say \emph{$\mu, \nu \in \mathcal{P}(\mathbb{C})$ satisfy Assumption \ref{assump:main} on $K$} if there exists a finite constant $C > 0$ so that 
\begin{equation} \label{eq:assump}
	\lambda \left( \left\{z \in K : |U_\mu(z) - U_{\nu}(z) | \leq \frac{\log^2 n}{\sqrt{n}} \right\} \right) \leq \frac{C}{\log^3 n} 
\end{equation}
for all $n > C$.
\end{assumption}

Intuitively, \eqref{eq:assump} requires that $\mu$ and $\nu$ be distinct so that their corresponding logarithmic potentials differ on a large enough subset of $K$.  

\begin{theorem} \label{thm:mainsimple}
Let $m \geq 2$ be a fixed integer, and assume $\mu_1, \ldots, \mu_m \in \mathcal{P}(\mathbb{C})$.  Let $\varphi: \mathbb{C} \to \mathbb{C}$ be a smooth function with compact support (denoted $\supp(\varphi)$).  Assume one of the following conditions:
\begin{enumerate}
\item For each $1 \leq k \leq m-1$, the measure $\mu_k$ is not supported on a circle centered in $\supp(\varphi)$.
\item For each $1 \leq k < l \leq m$, the measures $\mu_k, \mu_l$ satisfy Assumption \ref{assump:main} on $\supp(\varphi)$.  
\end{enumerate}
Let $\{X_{i, k} : 1 \leq k \leq m, i \geq 1 \}$ be a collection of independent random variables so that $X_{i, k}$ has distribution $\mu_k$ for each $i \geq 1$.  For each $n \geq 1$, define the degree $n$ polynomials
\[ p_{n, k}(z) := \prod_{i=1}^n (z - X_{i, k} ), \qquad 1 \leq k \leq m. \]
Then there exists a (deterministic) probability measure $\rho$ on $\mathbb{C}$ so that
\[ \frac{1}{n} \sum_{i=1}^n \varphi(z_i^{(n)}) \longrightarrow \int_{\mathbb{C}} \varphi \d \rho \]
in probability as $n \to \infty$, where $z_1^{(n)}, \ldots, z_n^{(n)}$ are the zeros of the sum $\sum_{k=1}^m p_{n,k}$.  Here, $\rho$ depends only on $\mu_1, \ldots, \mu_m$ and is uniquely determined by the identity 
\[ \rho = \frac{1}{2 \pi} \Delta U, \]
where $U(z) := \max_{1 \leq k \leq m} U_{\mu_k}(z)$ for each $z \in \mathbb{C}$ and where the Laplacian is interpreted in the distributional sense (see Remark \ref{rem:distribution}).  
\end{theorem}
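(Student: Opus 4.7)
The plan is to reduce Theorem \ref{thm:mainsimple} to an $L^1_{\mathrm{loc}}$-in-probability convergence statement for log-moduli, handle the easy upper bound via the triangle inequality, and devote the bulk of the work to the matching lower bound, which splits $\supp(\varphi)$ into a "dominant-term region" (where a law-of-large-numbers argument suffices) and a "competition set" (the central obstacle). Writing $P_n := \sum_{k=1}^m p_{n,k}$ --- a polynomial of degree exactly $n$ with leading coefficient $m$ --- the distributional identity $\Delta\log|{\cdot} - \zeta| = 2\pi\delta_\zeta$ combined with $\int\Delta\varphi\,d\lambda = 0$ yields
\[
\frac{1}{n}\sum_{i=1}^n \varphi(z_i^{(n)}) = \frac{1}{2\pi n}\int_{\mathbb{C}} \Delta\varphi(z)\log|P_n(z)|\, d\lambda(z);
\]
since $\int \varphi\, d\rho = \frac{1}{2\pi}\int \Delta\varphi\cdot U\, d\lambda$, the theorem reduces to showing $\frac{1}{n}\log|P_n|\to U$ in $L^1(K)$ in probability for any compact $K\supset \supp(\varphi)$.

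The upper bound follows from $\frac{1}{n}\log|P_n(z)| \leq \max_k\frac{1}{n}\log|p_{n,k}(z)| + \frac{\log m}{n}$ combined with the pointwise LLN $\frac{1}{n}\sum_i\log|z-X_{i,k}|\to U_{\mu_k}(z)$ and a uniform $L^{1+\delta}(K\times\supp(\mu_k))$ envelope for $\log|z - X_{i,k}|$, which delivers the $L^1(K)$ limit by dominated convergence. For the lower bound, set $\varepsilon_n := n^{-1/2}\log^2 n$ and partition $K = A_n\sqcup B_n$ where
\[
A_n := \bigl\{z \in K : \exists\, k^* \text{ with } U_{\mu_{k^*}}(z) \geq U_{\mu_k}(z) + 2\varepsilon_n \text{ for all } k\neq k^*\bigr\}.
\]
On $A_n$, a Bernstein-type deviation bound for the sum $\frac{1}{n}\sum_i\log|z - X_{i,k}|$ (after a mild truncation of the logarithmic singularity) shows that, with high probability simultaneously on a fine net of $A_n$, one has $|p_{n,k^*}(z)|\geq 2\sum_{k\neq k^*}|p_{n,k}(z)|$; extending to all of $A_n$ via Lipschitz-in-$z$ control of the truncated log-potentials gives $\frac{1}{n}\log|P_n(z)| \geq U(z) - O(\varepsilon_n)$ on $A_n$, which suffices for the $L^1(A_n)$ estimate.

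The heart of the argument is the competition set $B_n$, where two or more $U_{\mu_k}$ lie within $2\varepsilon_n$ of each other and cancellation among the $p_{n,k}$'s could in principle drive $|P_n(z)|$ far below $\max_k|p_{n,k}(z)|$. Under hypothesis (2), Assumption \ref{assump:main} applied pairwise gives $\lambda(B_n) = O(1/\log^3 n)$; combined with an a priori Jensen-type estimate $\int_K\bigl(\frac{1}{n}\log|P_n|\bigr)_-\,d\lambda = O(\log n)$ (obtained from Jensen's formula on a disc containing all $\supp(\mu_k)$), this shows the $B_n$-contribution to the test integral is $o(1)$ and finishes the proof. Under hypothesis (1) no area bound on $B_n$ is available, and one instead needs an anti-concentration estimate showing $|P_n(z)| \geq e^{n(U(z) - o(1))}$ with probability $1 - o(1)$ pointwise on $B_n$. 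The non-circle hypothesis on $\mu_1,\dots,\mu_{m-1}$ is precisely what supplies the non-degeneracy required for a Hal\'asz/Kolmogorov--Rogozin small-ball bound: if $\mu_k$ is not supported on a circle centered at $z$, then $\log(z-X_{i,k})$ has nontrivial covariance of its real and imaginary parts, preventing $p_{n,k}(z) = \exp\bigl(\sum_i\log(z-X_{i,k})\bigr)$ from concentrating near the prescribed complex value $-\sum_{l\neq k}p_{n,l}(z)$. Peeling off one polynomial at a time via induction on $m$ (using each of the $m-1$ non-circle hypotheses once) reduces the $m$-polynomial problem to the two-polynomial anti-concentration lemma underlying Theorem \ref{thm:mainsimple2}. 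This anti-concentration step --- achieving a small-ball bound uniformly in $z$ on $B_n$ with enough quantitative strength to survive an exceptional set of positive measure --- is the hardest element of the proof.
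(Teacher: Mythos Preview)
Your high-level reduction---pair $\frac{1}{n}\sum\varphi(z_i^{(n)})$ with $\frac{1}{2\pi n}\int\Delta\varphi\log|P_n|$ and compare to $\int\Delta\varphi\cdot U$---is the paper's starting point as well. But the decomposition you propose for the hard half is different, and under hypothesis (1) it contains a genuine gap.

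The paper does \emph{not} split $K$ into a dominant region $A_n$ and a competition set $B_n$. Instead it writes $\frac{1}{n}\log|P_n|$ as $\frac{1}{n}\log\max_k|p_{n,k}|$ plus a remainder. The first piece converges to $U$ almost surely against $\Delta\varphi$ by the law of large numbers and a dominated-convergence lemma---no Bernstein bounds, no nets. For the remainder, the paper introduces a Monte Carlo device: both $\frac{1}{n}\log|P_n|$ and $\frac{1}{n}\log\max_k|p_{n,k}|$ satisfy an a priori $L^2(K)$ bound of order $O(\log n)$ (the former via a Walsh-type root localization for $P_n$), so by sampling $\ell\asymp(\log n)^{5/2}$ independent uniform points $Z_1,\dots,Z_\ell$ in $\supp(\varphi)$ it suffices to show that $F_n(Z_j):=\frac{1}{n}\log|P_n(Z_j)|-\frac{1}{n}\log\max_k|p_{n,k}(Z_j)|$ is $O(1/n)$ with probability $1-O((\log n)^{-3})$ for each $j$. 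This boils the whole problem down to
\[
\Prob\Bigl(\,\exists\,k\neq l:\ \tfrac12\le\bigl|p_{n,k}(Z)/p_{n,l}(Z)\bigr|\le 2\Bigr)=O\bigl((\log n)^{-3}\bigr),
\]
because on the complement the largest $|p_{n,k}(Z)|$ dominates the sum geometrically and no cancellation is possible. Under hypothesis (2) this bound follows from Assumption~\ref{assump:main} after a Chebyshev concentration of $\frac{1}{n}\log|p_{n,k}(Z)|$ around $U_{\mu_k}(Z)$---essentially your $B_n$ argument, but run at a single random point rather than on a set.

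The gap is in your treatment of hypothesis (1). The assumption that $\mu_k$ is not supported on a circle centered in $\supp(\varphi)$ says precisely that $|X_{1,k}-z|$ is not almost surely constant for $z\in\supp(\varphi)$; equivalently, the \emph{real part} $\log|z-X_{1,k}|$ of $\log(z-X_{1,k})$ is non-degenerate. It gives no control on the argument $\arg(z-X_{1,k})$, and in particular does not yield ``nontrivial covariance of real and imaginary parts'': take $\mu_k$ supported on a line segment through $z$ and the argument is constant. A two-dimensional Hal\'asz bound is therefore not available from this hypothesis, and your complex-valued anti-concentration for $p_{n,k}(z)$ near $-\sum_{l\neq k}p_{n,l}(z)$ does not follow. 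What the paper actually does is apply the one-dimensional Kolmogorov--Rogozin inequality to the real sum $\log|p_{n,k}(z)|=\sum_i\log|z-X_{i,k}|$, obtaining $\mathcal{L}(\log|p_{n,k}(z)|,\log 2)=O(n^{-1/2})$ uniformly in $z\in K$; conditioning on the remaining polynomials, this forces the moduli $|p_{n,k}(Z)|$, $|p_{n,l}(Z)|$ to be separated by a factor of $2$ with probability $1-O(n^{-1/2})$, and on that event the sum is controlled by its largest term without any phase information. No induction on $m$ or ``peeling'' is needed---a union bound over the $\binom{m}{2}$ pairs suffices.
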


Theorem \ref{thm:mainsimple} can also be applied to measures supported on circles, as the following examples illustrate.  Let $B(z,r) := \{ w \in \mathbb{C} : |z - w| < r \}$ be the open disk of radius $r > 0$ centered at $z \in \mathbb{C}$.  

\begin{example} \label{example:circles}
Take $m=2$, and let  $\mu$ ($=\mu_1$) be the uniform probability measure on the circle of radius one centered at the origin and $\nu$ ($=\mu_2$) be the uniform probability measure on the circle of radius $r > 1$ centered at the origin.  Then Theorem \ref{thm:mainsimple} can be used to show that for any compactly supported and smooth function $\varphi: \mathbb{C} \to \mathbb{C}$, one has
\begin{equation} \label{eq:concircle}
	\frac{1}{n} \sum_{i=1}^n \varphi(z_i^{(n)}) \longrightarrow \int \varphi \d \nu 
\end{equation}
in probability as $n \to \infty$.  To see this, take $\eps := \frac{r - 1}{100}$ and note that if the compact set $K \subset \mathbb{C}$ is outside of $B(0, 1 + \eps)$, then $\mu$ is not supported on a circle centered in $K$.  Thus, Theorem \ref{thm:mainsimple} implies that \eqref{eq:concircle} holds for any $\varphi \in C^\infty_c(\mathbb{C})$  supported outside of $B(0, 1 + \eps)$.  Here, we have exploited the fact that $\max\{ U_{\mu}(z), U_{\nu}(z)\} = U_{\nu}(z)$ for all $z \in \mathbb{C}$ and $\frac{1}{2 \pi} \Delta U_{\nu} = \nu$ (see Theorem 3.7.4 in \cite{Ransford}).  In particular, by taking $\varphi$ to be an approximate indicator function, one deduces that
\[ \frac{ \# \left\{1 \leq i \leq n : z_i^{(n)} \in B(0, 1 + 3 \eps) \right\}}{n} \longrightarrow 0 \]
in probability as $n \to \infty$, where $\#S$ denotes the cardinality of the set $S$.  Thus, if $\varphi: \mathbb{C} \to \mathbb{C}$ is an arbitrary smooth and compactly supported function, then
\[ \frac{1}{n} \sum_{i=1}^n \varphi(z_i^{(n)}) - \frac{1}{n} \sum_{i=1}^n \varphi(z_i^{(n)}) \chi(z_i^{(n)}) \longrightarrow 0 \]
in probability, where $\chi: \mathbb{C} \to \mathbb{C}$ is a smooth approximation of an indicator function which takes the value $0$ inside $B(0, 1 + \eps)$ and which is one outside of $B(0, 1 + 2 \eps)$.  One can now utilize the previous convergence result for $\varphi$ supported outside of $B(0, 1 + \eps)$ to obtain the desired result.  A numerical simulation of this example is presented in Figure \ref{fig:circles}.  
\end{example}

\begin{figure}[t]
\centering
\includegraphics[width=0.5\textwidth]{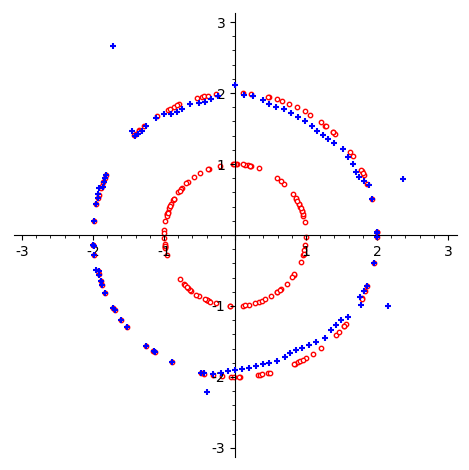}
\caption{A numerical simulation of Example \ref{example:circles} with $r=2$.  The red circles represent the roots of the individual polynomials (uniform on the circles centered at the origin with radii $1$ and $2$, respectively), each having degree $n=100$, and the blue crosses are the zeros of the sum.}
\label{fig:circles}
\end{figure}

\begin{remark}
A deterministic version of the above example can be seen by looking at the zeros of the polynomial sum $(z^n-1)+(z^n-2^n)$, which are given by $\left( \frac{2^n+1}{2} \right)^\frac{1}{n}w_k$ for $1 \leq k \leq n$, where $w_1,\dots,w_n$ are the $n$-th roots of unity. Therefore, the empirical distribution of zeros converges to the uniform distribution on the circle of radius $2$ centered at the origin. 
\end{remark}

\begin{example} \label{example:pointmass}
	Take $m=2$, and let $\mu$ ($=\mu_1$) and $\nu$ ($=\mu_2$) to be the uniform probability measures on the sets $\{\pm1\}$ and $\{\pm i\}$, respectively. Then $U_\mu$ and $U_\nu$ are computed to be $\frac{1}{2}\log|z^2-1|$ and $\frac{1}{2}\log|z^2+1|$. In this case, $\mu$ and $\nu$ are supported on circles, so Theorem \ref{thm:mainsimple2} does not apply.  However, $\mu$ and $\nu$ satisfy Assumption \ref{assump:main} on $\supp (\varphi)$ for any nonzero $\varphi \in C_c^\infty(\mathbb{C})$, hence Theorem \ref{thm:mainsimple} is applicable.  Let $z=x+iy$. Following the computations in Example \ref{example:disks}, we see the resulting measure $\rho$ is supported on the set $\Re(z^2)=0$ or, equivalently, on the lines $x+y=0$ and $x-y=0$. Moreover, it can be shown that the resulting measure is a mixture of distributions supported on these lines having density obtained by taking the square root of the absolute value of a Cauchy random variable after scaling by a factor of $2$. Notice, the tail behavior for the density of the resulting measure decays as $d^{-3}$ on either of the lines as $d$ tends to infinity, where $d$ is the distance from the origin. A numerical simulation of this example is presented in Figure \ref{fig:pointmass}.  Using a similar approach one may construct resulting measures $\rho$ whose densities have tails that decay as $d^{-2k-1}$ for any natural number $k$ as $d$ tends to infinity. 
\end{example}

\begin{figure}[t] 
\centering
\includegraphics[width=0.5\textwidth]{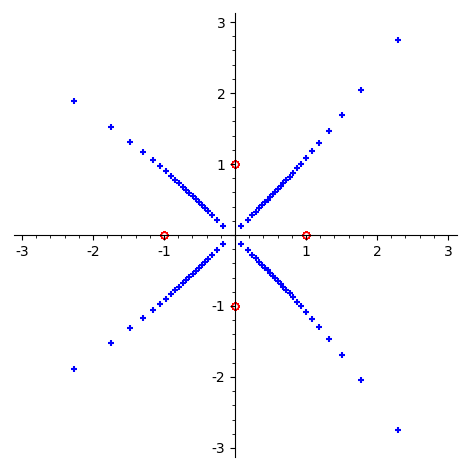}
\caption{A numerical simulation of Example \ref{example:pointmass}.  The red circles represent the roots of the individual polynomials (uniform on the sets $\{\pm 1\}$ and $\{\pm i\}$, respectively), each having degree $n=100$, and the blue crosses are the zeros of the sum.}
\label{fig:pointmass}
\end{figure}

\subsection{Comparison to free probability theory}

When $X$ and $Y$ are independent real-valued random variables with distributions $\mu_X$ and $\mu_Y$, the distribution of $X+Y$ is described by the convolution $\mu_X \ast \mu_Y$.  A natural way to describe the distribution $\mu_X \ast \mu_Y$ is to use characteristic functions since the characteristic function of $\mu_X \ast \mu_Y$ factors as the product of characteristic functions of $\mu_X$ and $\mu_Y$.

Free probability theory is concerned with non-commutative random variables.  The free analogue of the classical convolution is the free additive convolution of $\mu_X$ and $\mu_Y$, denoted $\mu_X \boxplus \mu_Y$, which describes the distribution of $X+Y$ when $X$ and $Y$ are freely independent non-commutative random variables.  The free additive convolution was introduced by Voiculescu in \cite{V} and later extended in \cite{BV,Mfree}.  Instead of characteristic functions, the free additive convolution $\mu_X \boxplus \mu_Y$ is naturally described using the $R$-transform.   Indeed, the $R$-transform of $\mu_X \boxplus \mu_Y$ can be written as the sum of $R$-transforms for $\mu_X$ and $\mu_Y$.  


The free additive convolution is especially useful in random matrix theory as it can be used to describe the limiting eigenvalue distribution of the sum of independent random matrices \cite{MS}.  The results in this paper focus on random polynomials rather than random matrices.  
In this way our results (Theorems \ref{thm:mainsimple2} and \ref{thm:mainsimple}) can be viewed as describing a version of the additive convolution from free probability theory for zeros of random polynomials.  The logarithmic potential appears to play the same role as the $R$-transform in free probability theory and the characteristic function in classical probability theory.  Our main results show that the pointwise maximum of the logarithmic potentials is used in the same way as the sum of $R$-transforms is used in free probability theory to describe the free additive convolution.

\subsection{Notation}

Throughout the paper, we use asymptotic notation (such as $O, o, \ll$) under the assumption that $n \to \infty$.  We write $X=O(Y)$, $Y=\Omega(X)$, $X \ll Y$, or $Y \gg X$ to denote the bound $|X| \leq CY$ for some constant $C > 0$ independent of $n$ and all $n > C$.  If the constant $C$ depends on a parameter, e.g., $C=C_k$, we indicate this with subscripts, e.g., $X=O_k(Y)$.  We allow the constant $C$ to depend on the measures in question (such as $\mu, \nu$ or $\mu_1, \ldots, \mu_m$) without denoting this dependence.  We use $X = o(Y)$ if $|X| \leq c_n Y$ for some $c_n$ that converges to zero as $n$ tends to infinity.  

For $z \in \mathbb{C}$ and $r > 0$, we define
\[ B(z,r) := \{ w \in \mathbb{C} : |z - w| < r \} \]
to be the open disk of radius $r$ centered at $z$, and take 
$B(r) := B(0, r)$.    

Let $\mathcal{P}(\mathbb{C})$ be the set of probability measures on $\mathbb{C}$ with compact support.  For a measure $\mu \in \mathcal{P}(\mathbb{C})$, we let $\supp(\mu) \subset \mathbb{C}$ denote the support of $\mu$.  $\lambda$ denotes Lebesgue measure on $\mathbb{C}$.  Let $C^\infty_c(\mathbb{C})$ denote the set of all smooth functions $\varphi: \mathbb{C} \to \mathbb{C}$ with compact support, and let $\supp \varphi \subset \mathbb{C}$ denote the support of $\varphi$.  

\subsection{Overview of the proofs and outline of the paper}

The proofs of our main results are based on the following theorem.

\begin{theorem} \label{thm:main}
Let $m \geq 2$ be a fixed integer, and assume $\mu_1, \ldots, \mu_m \in \mathcal{P}(\mathbb{C})$.  Let $\{X_{i, k} : 1 \leq k \leq m, i \geq 1 \}$ be a collection of independent random variables so that $X_{i, k}$ has distribution $\mu_k$ for each $i \geq 1$.  For each $n \geq 1$, define the degree $n$ polynomials
\[ p_{n, k}(z) := \prod_{i=1}^n (z - X_{i, k} ), \qquad 1 \leq k \leq m. \]
Let $\varphi: \mathbb{C} \to \mathbb{C}$ be a smooth and compactly supported function, and assume that
\begin{equation} \label{eq:mainassump}
	\Prob \left( \bigcup_{k \neq l} \left\{ \frac{1}{2} \leq \left| \frac{p_{n,k}(Z)}{p_{n,l}(Z)} \right| \leq 2 \right\} \right) = O_{\varphi} \left( \frac{1}{ \log^3 n} \right), 
\end{equation}
where $Z$ is a random variable, uniformly distributed on the support of $\varphi$, independent of $X_{i, k}$, $1 \leq k \leq m$, $i \geq 1$.  Then
\[ \frac{1}{n} \sum_{i=1}^n \varphi(z_i^{(n)}) \longrightarrow \frac{1}{2 \pi} \int_{\mathbb{C}} \Delta \varphi(z) \left( \max_{1 \leq k \leq m} U_{\mu_k}(z) \right) \d \lambda(z) \]
in probability as $n \to \infty$, where $z_1^{(n)}, \ldots, z_n^{(n)}$ are the zeros of the sum $\sum_{k=1}^m p_{n,k}$.  
\end{theorem}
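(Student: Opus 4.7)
The plan is to translate everything to logarithmic potentials via the Poincar\'e--Lelong identity. Set $P_n := \sum_{k=1}^m p_{n,k}$, a degree-$n$ polynomial with leading coefficient $m$ and zeros $z_1^{(n)}, \ldots, z_n^{(n)}$. Since $\Delta \log|P_n| = 2\pi \sum_i \delta_{z_i^{(n)}}$ in the distributional sense, integration by parts gives
\[
\frac{1}{n}\sum_{i=1}^n \varphi(z_i^{(n)}) = \frac{1}{2\pi n}\int_{\mathbb{C}} \Delta\varphi(z)\, \log|P_n(z)|\, d\lambda(z),
\]
while the right-hand side of the theorem is $\frac{1}{2\pi}\int \Delta\varphi \cdot \max_k U_{\mu_k}\, d\lambda$. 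It therefore suffices to show that $\frac{1}{n}\log|P_n|$ and $\max_k U_{\mu_k}$ are close in an $L^1(\supp\varphi)$-in-probability sense after pairing against $\Delta\varphi$.

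The geometric core is that wherever a single $|p_{n,k_*}(z)|$ strictly dominates the others, $|P_n(z)|$ is comparable to $|p_{n,k_*}(z)|$. Define the good set
\[
G_n := \left\{ z \in \supp\varphi : |p_{n,k}(z)/p_{n,l}(z)| \notin [\tfrac{1}{2},2] \text{ for all } k \neq l \right\}.
\]
For $z \in G_n$, ordering the moduli decreasingly yields $|p_{n,(j)}(z)| \leq 2^{1-j}|p_{n,(1)}(z)|$, so a triangle-inequality sandwich gives $|p_{n,(1)}(z)|/2^{m-1} \leq |P_n(z)| \leq m|p_{n,(1)}(z)|$ and hence
\[
\tfrac{1}{n}\log|P_n(z)| = M_n(z) + O_m(1/n), \qquad M_n(z) := \max_{1\leq k\leq m} \tfrac{1}{n}\log|p_{n,k}(z)|.
\]
Hypothesis \eqref{eq:mainassump} combined with Fubini gives $\mathbb{E}\,\lambda(\supp\varphi \setminus G_n) = O(1/\log^3 n)$, which by Markov's inequality is at most $\log^{-5/2}n$ with probability $1 - o(1)$. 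A direct variance computation shows $\mathbb{E}\int_{\supp\varphi}(\tfrac{1}{n}\log|p_{n,k}| - U_{\mu_k})^2\, d\lambda = O(1/n)$, the key input being that $\int\int_{\supp\varphi}\log^2|z-w|\, d\lambda(z)\, d\mu_k(w)$ is finite because $\log^2|z-w|$ is locally integrable in $z$ uniformly in $w$ in a compact set. Via $|M_n - \max_k U_{\mu_k}| \leq \sum_k |\tfrac{1}{n}\log|p_{n,k}| - U_{\mu_k}|$ we deduce $\mathbb{E}\int_{\supp\varphi}|M_n - \max_k U_{\mu_k}|\, d\lambda = O(n^{-1/2})$, which together with the sandwich above handles the $G_n$ contribution.

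What remains is to show that the contribution from $\supp\varphi \setminus G_n$ is $o_P(1)$ for both $\frac{1}{n}\log|P_n|$ and $M_n$. The plan is to obtain an a priori bound $\|\frac{1}{n}\log|P_n|\|_{L^2(\supp\varphi)} = O(\log n)$ with high probability, via Jensen $(\tfrac{1}{n}\sum_i \log|z-z_i^{(n)}|)^2 \leq \tfrac{1}{n}\sum_i \log^2|z-z_i^{(n)}|$ combined with the uniform bound $\int_{\supp\varphi}\log^2|z-w|\, d\lambda(z) = O(1 + \log^2|w|)$ and a polynomial-in-$n$ control on the moduli of the zeros $z_i^{(n)}$. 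H\"older's inequality then gives
\[
\left|\int_{\supp\varphi \setminus G_n} \Delta\varphi(z) \cdot \tfrac{1}{n}\log|P_n(z)|\, d\lambda(z)\right| \leq \|\Delta\varphi\|_\infty \cdot \lambda(\supp\varphi \setminus G_n)^{1/2} \cdot O(\log n) = o_P(1),
\]
and the analogous (easier, since the roots are bounded) bound for $M_n$. The main technical obstacle is precisely this a priori $L^2$ estimate: unlike the individual $p_{n,k}$, whose roots are confined to $\supp\mu_k$, the zeros of $P_n$ need not live in any fixed compact set as $n \to \infty$ (as Example \ref{example:disks} with its Cauchy limit already demonstrates), so one cannot rely on compactness of the zero set and must quantify how fast these zeros may drift in order to close the estimate.
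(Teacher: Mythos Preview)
Your argument is correct and rests on the same ingredients as the paper's proof---the Poincar\'e--Lelong identity, the comparison $\tfrac{1}{n}\log|P_n|=M_n+O_m(1/n)$ on the set where a single $|p_{n,k}|$ dominates (the paper's Lemma~\ref{lemma:sumtomax}), and the a~priori $L^2$ bound $\tfrac{1}{n^2}\int_{\supp\varphi}\log^2|P_n|\,d\lambda=O(\log^2 n)$ (Lemma~\ref{lemma:logsumtight})---but the packaging is different. Where you split $\supp\varphi$ into $G_n$ and its complement and control the bad set by H\"older, the paper instead compares $\int\Delta\varphi\cdot\tfrac{1}{n}\log|P_n|$ with $\int\Delta\varphi\cdot M_n$ via Monte~Carlo sampling (Lemma~\ref{lemma:monte}): draw $l=\lceil(\log n)^{5/2}\rceil$ i.i.d.\ uniform points in $\supp\varphi$, use the $L^2$ bound so that the empirical average of $\Delta\varphi\cdot(\tfrac{1}{n}\log|P_n|-M_n)$ approximates its integral, and apply \eqref{eq:mainassump} with a union bound over the $l$ points to make each sample land in $G_n$. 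Your direct H\"older approach is arguably more elementary and sidesteps the sampling lemma. For the passage $M_n\to\max_k U_{\mu_k}$ the paper uses pointwise law of large numbers together with a Tao--Vu dominated convergence lemma (Lemma~\ref{lemma:dominated}) rather than your $L^2$ variance bound; either works. Finally, the ``polynomial-in-$n$ control on the zeros'' that you correctly isolate as the remaining obstacle is precisely what the paper supplies via Walsh's theorem (Lemma~\ref{lemma:detsum}, used in Lemma~\ref{lemma:tight}): all zeros of $P_n$ lie in $B(Cn^{m-1})$ almost surely, which is exactly the input your $L^2$ estimate needs.
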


The proof of Theorem \ref{thm:main} is presented in Section \ref{sec:mainproof}.  We use Theorem \ref{thm:main} to prove our main results in Section \ref{sec:sideproofs}, where the main task is showing that condition \eqref{eq:mainassump} follows from the assumptions of Theorems \ref{thm:mainsimple2} and \ref{thm:mainsimple}.  In fact, the only place in the proof where we use that the measures $\mu_1, \ldots, \mu_{m-1}$ are not supported on circles (alternatively, the measures satisfy Assumption \ref{assump:main}) is in establishing condition \eqref{eq:mainassump} in Lemma \ref{lemma:circles} (alternatively, Lemma \ref{lemma:gap}).  To identify the limiting measure, we will utilize the following result.  

\begin{lemma} \label{lemma:genlap}
Let $m \geq 1$ be a fixed integer, and take $\mu_1, \ldots, \mu_m \in \mathcal{P}(\mathbb{C})$.  Then there exists a unique (deterministic) probability measure $\rho$ on $\mathbb{C}$ so that
\begin{equation} \label{eq:genlap}
	 \int_{\mathbb{C}} \varphi \d \rho = \frac{1}{2 \pi} \int_{\mathbb{C}} \Delta \varphi (z) \left( \max_{1 \leq k \leq m} U_{\mu_k}(z) \right) \d \lambda(z)  \quad \text{ for all } \varphi \in C^\infty_c (\mathbb{C}).
\end{equation}
In addition, \eqref{eq:genlap} uniquely defines $\rho$.  
\end{lemma}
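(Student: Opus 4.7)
The plan is to construct $\rho$ as the distributional Laplacian of $U := \max_{1 \leq k \leq m} U_{\mu_k}$, verify that this defines a positive Radon measure, show its total mass equals $1$ using the asymptotic $U(z) \sim \log|z|$ at infinity, and finally obtain uniqueness from density of test functions.

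First I would note that, since each $\mu_k$ has compact support, each $U_{\mu_k}$ is subharmonic and locally integrable on $\mathbb{C}$ (Theorem 3.7.4 in \cite{Ransford}). The pointwise maximum of finitely many subharmonic functions is subharmonic, so $U$ is subharmonic and locally integrable. The Riesz representation theorem for subharmonic functions then produces a unique positive Radon measure $\rho$ on $\mathbb{C}$ with $\rho = \frac{1}{2\pi}\Delta U$ in the distributional sense, which is precisely the identity \eqref{eq:genlap}.

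Next I would verify that $\rho(\mathbb{C}) = 1$. Since each $\mu_k$ is a compactly supported probability measure, $U_{\mu_k}(z) = \log|z| + O(1/|z|)$ as $|z| \to \infty$, and the same asymptotic is inherited by $U$. Fix a $\chi \in C_c^\infty(\mathbb{C})$ with $\chi \equiv 1$ on $B(0,1)$ and $\supp(\chi) \subset B(0,2)$, and set $\chi_R(z) := \chi(z/R)$. Applying \eqref{eq:genlap} to $\varphi = \chi_R$ and changing variables gives
\[ \int_{\mathbb{C}} \chi_R \d \rho = \frac{1}{2\pi}\int_{\mathbb{C}} (\Delta \chi)(w) \, U(Rw) \, d\lambda(w). \]
On the annulus $1 \leq |w| \leq 2$ where $\Delta\chi$ is supported, $U(Rw) = \log R + \log|w| + O(1/R)$. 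Using $\int_{\mathbb{C}} \Delta \chi \, d\lambda = 0$ (integration by parts) and $\frac{1}{2\pi}\int_{\mathbb{C}} (\Delta \chi)(w) \log|w|\, d\lambda(w) = \chi(0) = 1$ (from $\Delta \log|\cdot| = 2\pi\delta_0$ in the distributional sense), letting $R \to \infty$ yields $\rho(\mathbb{C}) = 1$.

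Finally, for uniqueness, if two Radon measures $\rho_1, \rho_2$ both satisfy \eqref{eq:genlap}, then $\int \varphi \, d(\rho_1 - \rho_2) = 0$ for every $\varphi \in C_c^\infty(\mathbb{C})$; density of $C_c^\infty(\mathbb{C})$ in $C_c(\mathbb{C})$ in the sup norm forces $\rho_1 = \rho_2$, which settles both uniqueness assertions simultaneously. The one potentially delicate step is the mass computation, but since it reduces to a routine application of the fundamental-solution property of $\log|z|$ combined with the probability-measure asymptotics at infinity, I expect no essential difficulty.
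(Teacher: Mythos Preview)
Your proposal is correct and follows essentially the same route as the paper: subharmonicity of the pointwise maximum, the Riesz decomposition to produce the measure, the asymptotic $U(z)=\log|z|+O(1/|z|)$ together with an approximate indicator on a large disk to compute the total mass, and density of test functions for uniqueness. The only differences are cosmetic---you spell out the scaling computation with $\chi_R$ and the fundamental-solution identity more explicitly than the paper does, and you cite Theorem~3.7.4 of \cite{Ransford} where the paper cites Theorem~3.1.2 (the latter is the one giving subharmonicity and the asymptotic \eqref{eq:logzasym}).
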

\begin{proof}
The lemma follows from classical results on subharmonic functions and potential theory (see \cite{AG, Ransford}); we sketch the details below.  Since $U_{\mu_1}, \ldots, U_{\mu_m}$ are all subharmonic on $\mathbb{C}$ (see for instance Theorem 3.1.2 in \cite{Ransford}), it is easy to verify that the pointwise maximum is also subharmonic on $\mathbb{C}$.  By standard results for subharmonic functions (see Section 3.7 in \cite{Ransford} or Section 4.3 in \cite{AG}), there exists a unique Radon measure $\rho$ on $\mathbb{C}$ that satisfies \eqref{eq:genlap}.  The fact that \eqref{eq:genlap} uniquely defines $\rho$ follows from the Riesz representation theorem (see Section 3.7 in \cite{Ransford} or Section 4.3 in \cite{AG}).  To see that $\rho$ is a probability measure, we observe from Theorem 3.1.2 in \cite{Ransford} that there exists a constant $C > 0$ (depending on the measures $\mu_1, \ldots, \mu_m$) so that 
\begin{equation} \label{eq:logzasym}
	\left| \max_{1 \leq k \leq m} U_{\mu_k}(z) - \log |z| \right| \leq \frac{C}{|z|} 
\end{equation} 
for all $|z| \geq C$.  By taking $\varphi \in C^\infty_c (\mathbb{C})$ to be an approximate indicator function supported on a large disk centered at the origin, it follows from \eqref{eq:genlap} and the asymptotic behavior in \eqref{eq:logzasym} that $\rho$ is a probability measure.  
\end{proof}

\subsection*{Acknowledgements}

We are grateful to Noah Williams, Arjun Ramani, and Eojin Lee for discussions and assistance with the numerical simulations and figures in the paper.  We also thank the anonymous referee for useful feedback and suggestions.  T.R. Reddy is grateful to M. Krishnapur for the initial discussions on this problem and for the initial formulation of the solution. T.R. Reddy is grateful to  M. Sodin for sending his article \cite{Sodin} on value distributions of rational functions. T. R. Reddy is also grateful to S. O'Rourke for the hospitality during his visit to CU Boulder and NYUAD GPPO for supporting the travel.

\section{Proof of Theorem \ref{thm:main}} \label{sec:mainproof}

\subsection{Preliminary results}

We begin with some auxiliary results which we will need for the proof of Theorem \ref{thm:main}.  

\begin{lemma} \label{lemma:U2bound}
Suppose $\mu \in \mathcal{P}(\mathbb{C})$.  Then for each compact set $K \subset \mathbb{C}$, there exists a finite constant $C > 0$ (depending only on $\mu$ and $K$) so that 
\[ \int_{K} U^2_{\mu}(z) \d \lambda(z) \leq \int_{K} \int_{\mathbb{C}} \log^2 |z - w| \d \mu(w) \d \lambda(z) \leq C. \]
\end{lemma}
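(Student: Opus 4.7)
The plan is to handle the two inequalities separately, both via standard techniques.

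For the first inequality, I would apply Jensen's inequality pointwise in $z$ with the convex function $t \mapsto t^2$ against the probability measure $\mu$. This gives, for every $z$ at which $U_\mu(z)$ is finite,
\[ U_\mu^2(z) = \left( \int_{\mathbb{C}} \log|z-w| \d \mu(w) \right)^2 \leq \int_{\mathbb{C}} \log^2|z-w| \d \mu(w). \]
Integrating over $K$ with respect to $\lambda$ yields the first inequality. The set of $z$ where $U_\mu(z) = -\infty$ has Lebesgue measure zero (since $\mu$ has compact support, $U_\mu$ is subharmonic on $\mathbb{C}$ and not identically $-\infty$, hence finite a.e.), so the pointwise bound is valid $\lambda$-a.e., which is all that is needed for the integrated inequality.

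For the second inequality, I would interchange the order of integration via Fubini's theorem (justified by the nonnegativity of $\log^2|z-w|$) and then bound the resulting inner integral uniformly for $w \in \supp(\mu)$. Since both $K$ and $\supp(\mu)$ are compact, there exists $R > 0$ so that $|z-w| \leq R$ for all $z \in K$ and all $w \in \supp(\mu)$. Hence, for any such $w$, passing to polar coordinates centered at $w$ yields
\[ \int_K \log^2|z-w| \d \lambda(z) \leq \int_{B(w,R)} \log^2|z-w| \d \lambda(z) = 2\pi \int_0^R r \log^2 r \, dr, \]
which is a finite constant depending only on $R$ (and hence only on $K$ and $\mu$). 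Integrating this uniform bound against the probability measure $\mu$ completes the argument.

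No substantive obstacle arises: the only minor technicality is the $\lambda$-a.e. validity of Jensen's step, handled by the standard fact that a subharmonic function on $\mathbb{C}$ which is not identically $-\infty$ is finite $\lambda$-almost everywhere (see, e.g., the references to \cite{Ransford} cited earlier in the excerpt).
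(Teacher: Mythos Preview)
Your proposal is correct and follows essentially the same route as the paper: Jensen's inequality for the first inequality, then Fubini to swap the order of integration and a uniform bound on $w \mapsto \int_K \log^2|z-w|\, d\lambda(z)$ over $\supp(\mu)$ for the second. The only cosmetic difference is that the paper obtains this uniform bound by citing continuity of the map $w \mapsto \int_K \log^2|z-w|\, d\lambda(z)$ (their Lemma~\ref{lemma:log}), whereas you compute it directly via polar coordinates; both arguments are equally valid and equally short.
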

\begin{proof}
By Jensen's inequality and Fubini's theorem, 
\begin{align*}
	\int_{K} U^2_{\mu}(z) \d \lambda(z) &\leq \int_{K} \int_{\mathbb{C}} \log^2 |z - w| \d \mu(w) \d \lambda(z) \\
	&= \int_{\supp(\mu)}  \int_{K} \log^2 |z - w| \d \lambda(z) \d \mu(w),  
\end{align*}
where $\supp(\mu)$ denotes the support of $\mu$.  
Since the function
\begin{equation} \label{eq:funeq}
	w \mapsto \int_{K} \log^2 |z - w| \d \lambda(z) 
\end{equation} 
is continuous (see Lemma \ref{lemma:log}), it follows that the function in \eqref{eq:funeq} is bounded on compact sets.  The claim now follows as $\mu$ is compactly supported.  
\end{proof}

\begin{lemma} \label{lemma:pnqnbounds}
Under the assumptions of Theorem \ref{thm:main} and for any compact set $K \subset \mathbb{C}$, 
\begin{equation} \label{eq:pnqnbnd}
	\sup_{1 \leq k \leq m} \frac{1}{n^2} \int_{K} \log^2 |p_{n, k}(z)| \d \lambda(z) = O_{K}(1)
\end{equation}
and
\begin{equation} \label{eq:maxpnqn}
	\frac{1}{n^2} \int_{K} \log^2 \left( \max_{1 \leq k \leq m} |p_{n, k}(z)| \right) \d \lambda(z) = O_{K,m}(1)
\end{equation}
with probability $1$.  
\end{lemma}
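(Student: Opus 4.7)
The plan is to exploit the factorization $\log|p_{n,k}(z)| = \sum_{i=1}^n \log|z - X_{i,k}|$ and reduce both bounds to the elementary fact that $w \mapsto \int_K \log^2|z-w|\,d\lambda(z)$ is continuous (and hence bounded on compact sets), which is exactly the content of Lemma \ref{lemma:log} already invoked in Lemma \ref{lemma:U2bound}.

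For the first bound, I would apply Cauchy--Schwarz (equivalently Jensen's inequality for the convex function $x \mapsto x^2$) to write
\[
\frac{1}{n^2}\log^2|p_{n,k}(z)| \;=\; \left(\frac{1}{n}\sum_{i=1}^n \log|z-X_{i,k}|\right)^2 \;\leq\; \frac{1}{n}\sum_{i=1}^n \log^2|z-X_{i,k}|.
\]
Integrating over $K$ and interchanging the sum and the integral by Fubini gives
\[
\frac{1}{n^2}\int_K \log^2|p_{n,k}(z)|\,d\lambda(z) \;\leq\; \frac{1}{n}\sum_{i=1}^n \int_K \log^2|z-X_{i,k}|\,d\lambda(z).
\]
Since each $\mu_k$ has compact support, with probability one all $X_{i,k}$ lie in the compact set $\supp(\mu_k)$, and by the continuity asserted in Lemma \ref{lemma:log} the function $w \mapsto \int_K \log^2|z-w|\,d\lambda(z)$ is bounded on $\supp(\mu_k)$ by a constant depending only on $K$ and $\supp(\mu_k)$. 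Hence each summand is $O_K(1)$ almost surely, and averaging over $i$ preserves this bound. Taking the supremum over the finite set $1 \leq k \leq m$ yields \eqref{eq:pnqnbnd}.

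For the second bound, I would use the elementary inequality $(\max_k a_k)^2 \leq \sum_k a_k^2$, which holds in all sign patterns because $|\max_k a_k| \leq \max_k |a_k|$ and hence $(\max_k a_k)^2 \leq \max_k a_k^2 \leq \sum_k a_k^2$. Applied to $a_k := \log|p_{n,k}(z)|$ together with the identity $\log\max_k|p_{n,k}(z)| = \max_k \log|p_{n,k}(z)|$, this gives
\[
\log^2\!\left(\max_{1\leq k\leq m}|p_{n,k}(z)|\right) \;\leq\; \sum_{k=1}^m \log^2|p_{n,k}(z)|.
\]
Integrating over $K$ and summing the $m$ copies of the estimate in \eqref{eq:pnqnbnd} produces \eqref{eq:maxpnqn} with a constant depending additionally on $m$.

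There is no real obstacle here: the only slightly non-obvious step is the sign-agnostic inequality $(\max_k a_k)^2 \leq \sum_k a_k^2$, which is needed precisely because $x \mapsto x^2$ is not monotone, so one cannot dominate $\log^2 \max_k|p_{n,k}|$ by a maximum of individual $\log^2$ terms without first passing through absolute values. Once that is observed, the whole lemma is a one-line consequence of Cauchy--Schwarz and the previously-cited continuity lemma.
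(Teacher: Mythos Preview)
Your proposal is correct and follows essentially the same route as the paper: Cauchy--Schwarz on the factorization $\log|p_{n,k}(z)| = \sum_i \log|z-X_{i,k}|$, then the continuity of $w\mapsto\int_K\log^2|z-w|\,d\lambda(z)$ on the compact union of supports for \eqref{eq:pnqnbnd}, and the inequality $\log^2(\max_k a_k)\leq\sum_k\log^2 a_k$ for \eqref{eq:maxpnqn}. Your explicit justification of the sign-agnostic step is a nice addition but not a different argument.
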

\begin{proof}
We begin by establishing \eqref{eq:pnqnbnd}.  
By the Cauchy--Schwarz inequality,  
\[ \log^2 |p_{n,k}(z)| = \left( \sum_{i=1}^n \log |z - X_{i,k}| \right)^2 \leq n \sum_{i=1}^n \log^2 |z - X_{i,k}|, \]
and hence 
\begin{align*}
	\frac{1}{n^2} \int_{K} \log^2 |p_{n,k}(z)| \d \lambda(z) &\leq \frac{1}{n} \sum_{i=1}^n \int_{K} \log^2 |z - X_{i,k}| \d \lambda(z) \\
	&\leq \sup_{w \in \supp(\mu_k)} \int_{K} \log^2 |z - w| \d \lambda(z)   
\end{align*}
with probability $1$.  We conclude that
\[ \sup_{1 \leq k \leq m} \frac{1}{n^2} \int_{K} \log^2 |p_{n,k}(z)| \d \lambda(z) \leq \sup_{w \in K'} \int_{K} \log^2 |z - w| \d \lambda(z), \]
where $K' := \cup_{k=1}^m \supp(\mu_k)$.  
The claim now follows since the function in \eqref{eq:funeq} is bounded on compact sets (see Lemma \ref{lemma:log}) and $\mu_1, \ldots, \mu_m$ are compactly supported.  

The bound in \eqref{eq:maxpnqn} follows from \eqref{eq:pnqnbnd} and the trivial bound
\[ \log^2 \left(\max_{1 \leq k \leq m} a_k \right) \leq \sum_{k=1}^m \log^2 a_k, \]
valid for all $a_1, \ldots, a_m > 0$.  
\end{proof}

\begin{lemma} \label{lemma:tight}
Under the assumptions of Theorem \ref{thm:main}, there exists a finite constant $C > 0$ (depending only on $\mu_1, \ldots, \mu_m$ and $m$) so that, with probability $1$, all the roots of $\sum_{k=1}^m p_{n,k}$ are contained in the disk $B(Cn^{m-1})$.  
\end{lemma}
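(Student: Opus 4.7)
The plan is to leverage three elementary ingredients. First, compactness of $\supp(\mu_k)$ yields a deterministic constant $R > 0$, depending only on $\mu_1, \ldots, \mu_m$, such that $|X_{i,k}| \leq R$ almost surely for all $i$ and $k$. Second, since each $p_{n,k}$ is monic of degree $n$, the sum $P_n(z) := \sum_{k=1}^m p_{n,k}(z)$ has degree exactly $n$ with leading coefficient $m \neq 0$, so in particular all its roots are finite. Third, a Cauchy-type comparison at a hypothetical zero will force the modulus of that zero to be $O(n)$, which is well within $O(n^{m-1})$ once $m \geq 2$.

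Concretely, at any zero $z_0$ of $P_n$ one has
\[ m|z_0|^n = |P_n(z_0) - m z_0^n| \leq \sum_{k=1}^m |p_{n,k}(z_0) - z_0^n|. \]
Expanding $p_{n,k}(z) - z^n$ via elementary symmetric polynomials in $X_{1,k}, \ldots, X_{n,k}$ and using $|X_{i,k}| \leq R$ to bound the $j$-th elementary symmetric polynomial by $\binom{n}{j} R^j$, one obtains $|p_{n,k}(z_0) - z_0^n| \leq (|z_0|+R)^n - |z_0|^n$. Summing over $k$ and rearranging gives $2|z_0|^n \leq (|z_0|+R)^n$, that is, $|z_0| \leq R/(2^{1/n}-1)$. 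Since $e^x \geq 1+x$ yields $2^{1/n} - 1 \geq (\ln 2)/n$, we conclude $|z_0| \leq Rn/\ln 2$.

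Choosing $C > 0$ depending only on $R$ and $m$ large enough that $Rn/\ln 2 \leq C n^{m-1}$ for every $n \geq 1$ (trivial since $m \geq 2$), every zero of $P_n$ lies in $B(Cn^{m-1})$ with probability one. I do not anticipate a real obstacle: the stated bound $O(n^{m-1})$ is considerably weaker than the $O(n)$ bound this approach actually yields, so there is substantial slack. The only bookkeeping required is uniformity of $C$ across all $n$ and all realizations of the $X_{i,k}$'s, which is immediate once the deterministic uniform bound $R$ is fixed.
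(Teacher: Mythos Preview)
Your proof is correct and in fact yields a sharper bound than the paper's. The paper proceeds via a classical inequality of Walsh (Lemma~\ref{lemma:detsum2}), applied iteratively to add one polynomial at a time: each application inflates the radius by a factor of $2/\sin(\pi/n) \asymp n$, so after $m-1$ steps one lands at radius $O(n^{m-1})$. Your approach bypasses this machinery entirely with a direct Cauchy-type coefficient estimate, obtaining $|z_0| \leq Rn/\ln 2 = O(n)$ uniformly in $m$. This is both more elementary (no external reference needed) and strictly stronger; the paper's iterative use of Walsh's bound is what produces the artificial dependence on $m$ in the exponent. For the purposes of the paper the weaker bound suffices, since Lemma~\ref{lemma:tight} is only invoked in Lemma~\ref{lemma:logsumtight} to conclude that $\sup_{w \in B(Cn^{m-1})} \int_{B(M)} \log^2|z-w|\,d\lambda(z) = O(\log^2 n)$, and any polynomial-in-$n$ radius would do there.
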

\begin{proof}
Since $\mu_1, \ldots, \mu_m$ have compact support, there exists a finite constant $M > 0$ so that $\supp(\mu_k) \in B(M)$ for $1 \leq k \leq m$.  This implies that $X_{i, k} \in B(M)$ with probability $1$ for all $i \geq 1$ and every $1 \leq k \leq m$.  The claim now follows from Lemma \ref{lemma:detsum}, which is a consequence of a deterministic bound due to Walsh \cite{Walsh}.  
\end{proof}

\begin{lemma} \label{lemma:logsumtight}
Under the assumptions of Theorem \ref{thm:main} and for any $M > 0$,  
\[ \frac{1}{n^2} \int_{B(M)} \log^2 \left| \sum_{k=1}^m p_{n,k}(z) \right| \d \lambda(z) = O_{M, m}(\log^2 n) \]
with probability $1$.  
\end{lemma}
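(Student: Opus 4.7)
My approach is to factor the polynomial sum using its zeros, invoke Cauchy--Schwarz to pass from $\log^2$ of a product to a sum of $\log^2$ terms, and then use Lemma \ref{lemma:tight} to control the locations of those zeros.

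First, since each $p_{n, k}$ is monic of degree $n$, the sum $F_n := \sum_{k=1}^{m} p_{n, k}$ has leading coefficient $m$ and factors as $F_n(z) = m \prod_{i=1}^{n} (z - z_i^{(n)})$. Taking logarithms gives $\log |F_n(z)| = \log m + \sum_{i=1}^n \log |z - z_i^{(n)}|$, and Cauchy--Schwarz applied to these $n+1$ summands yields
\[
\log^2 |F_n(z)| \;\leq\; (n+1)\left( \log^2 m + \sum_{i=1}^{n} \log^2 |z - z_i^{(n)}| \right).
\]
Integrating over $B(M)$ and dividing by $n^2$, it therefore suffices to establish the deterministic bound
\[
\sup_{w \in B(R)} \int_{B(M)} \log^2 |z - w| \d \lambda(z) = O_{M, m}(\log^2 n),
\]
with $R := C n^{m - 1}$ and $C$ the constant from Lemma \ref{lemma:tight}; that lemma guarantees that, with probability one, every zero of $F_n$ lies in $B(R)$.

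To verify the auxiliary estimate I would split on the magnitude of $|w|$. If $|w| \leq 2M$, then the translation $u = z - w$ and monotonicity yield
\[
\int_{B(M)} \log^2 |z - w| \d \lambda(z) \leq \int_{B(3M)} \log^2 |u| \d \lambda(u) = O_M(1),
\]
by the local integrability of $\log^2$ in polar coordinates. If $2M < |w| \leq R$, then for $z \in B(M)$ the triangle inequality gives $|w|/2 \leq |z - w| \leq 2|w|$, so that $|\log |z - w||$ is controlled by $\log 2 + |\log |w||$, which in turn is $O_m(\log n)$ because $|w| \leq R = O_m(n^{m-1})$. Thus the integral is $O_{M, m}(\log^2 n)$ in this case as well. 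Summing the resulting per-root contributions over $i = 1, \ldots, n$ and dividing by $n^2$ produces the claimed bound.

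The only real obstacle is tracking the correct growth rate: under the hypotheses of Theorem \ref{thm:main}, the roots of $F_n$ may drift as far as $Cn^{m-1}$ from the origin, and the logarithm of that distance grows as $O_m(\log n)$. Squared, this factor is precisely what appears in the final estimate, and it is sharp at the level of the crude bound above. Any attempt to remove the $\log^2 n$ factor would require finer control on where the zeros of $F_n$ actually concentrate, which is not available at this generality and, fortunately, is not needed for the downstream application of the lemma.
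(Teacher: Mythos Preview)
Your argument is correct and follows the paper's proof essentially line for line: the same factorization $F_n(z)=m\prod_i(z-z_i^{(n)})$, the same Cauchy--Schwarz step with $n+1$ summands, the appeal to Lemma~\ref{lemma:tight} to confine the roots to $B(Cn^{m-1})$, and the same two-case split $|w|\le 2M$ versus $2M<|w|\le Cn^{m-1}$ to bound $\sup_w\int_{B(M)}\log^2|z-w|\,d\lambda(z)$.
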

\begin{proof}
Recall that $z_1^{(n)}, \ldots, z_n^{(n)}$ are the roots of $\sum_{k=1}^m p_{n,k}(z)$.  
Since 
\[ \sum_{k=1}^m p_{n,k}(z) = m \prod_{i=1}^n (z - z_i^{(n)}), \]
it follows from the Cauchy--Schwarz inequality that
\begin{align*}
	\log^2 \left| \sum_{k=1}^m p_{n,k}(z) \right| &= \left( \sum_{i=1}^n \log |z-z_i^{(n)}| + \log m \right)^2 \\
	&\leq (n+1) \left( \sum_{i=1}^n \log^2 |z - z_i^{(n)}| + \log^2 m \right). 
\end{align*}
We deduce that, with probability $1$, 
\begin{align*}
	\frac{1}{n^2} \int_{B(M)} \log^2 \left| \sum_{k=1}^m p_{n,k}(z) \right| \d \lambda(z) &\leq \frac{2}{n} \sum_{i=1}^n \int_{B(M)} \log^2 |z - z_i^{(n)}|  \d \lambda(z) + o_M(1) \\
	&\leq 2 \sup_{w \in B(Cn^{m-1})} \int_{B(M)} \log^2 |z - w| \d \lambda(z) + o_M(1),
\end{align*}
where in the last step we applied Lemma \ref{lemma:tight}.  The conclusion now follows from the fact that
\[ \sup_{w \in B(Cn^{m-1})} \int_{B(M)} \log^2 |z - w| \d \lambda(z) = O_{M, m}(\log^2 n), \]
which can be deduced by considering the cases $w \in B(2M)$ and $w \in B(Cn^{m-1}) \setminus B(2M)$ separately.  
\end{proof}

\subsection{A reduction}

In this section, we simplify the proof of Theorem \ref{thm:main} by exploiting the following result.  

\begin{lemma} \label{lemma:almostsure}
Let $m \geq 1$ be a fixed integer, and assume $\mu_1, \ldots, \mu_m \in \mathcal{P}(\mathbb{C})$.  Let $\{X_{i, k} : 1 \leq k \leq m, i \geq 1 \}$ be a collection of independent random variables so that $X_{i, k}$ has distribution $\mu_k$ for each $i \geq 1$.  For each $n \geq 1$, define the degree $n$ polynomials
\[ p_{n, k}(z) := \prod_{i=1}^n (z - X_{i, k} ), \qquad 1 \leq k \leq m. \]
If $\varphi: \mathbb{C} \to \mathbb{C}$ is a smooth and compactly supported function, then 
\[ \frac{1}{n} \int_{\mathbb{C}} \Delta \varphi(z) \log \left( \max_{1 \leq k \leq m} |p_{n,k}(z)| \right)  \d \lambda(z) \longrightarrow \int_{\mathbb{C}} \Delta \varphi(z) \left ( \max_{1 \leq k \leq m} U_{\mu_k}(z) \right) \d \lambda(z) \]
almost surely as $n \to \infty$.  
\end{lemma}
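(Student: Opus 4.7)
The plan is to establish the claim in two steps: first show pointwise (in $z$) almost sure convergence of $\frac{1}{n}\log\max_k |p_{n,k}(z)|$ to $\max_k U_{\mu_k}(z)$ for Lebesgue-a.e.\ $z$, and then upgrade to convergence of the integral against $\Delta\varphi$ via a uniform integrability argument. For the first step, I would fix $1\le k \le m$ and observe that for each $z \in \mathbb{C}$ satisfying $\int_{\mathbb{C}}\log^2|z-w|\,d\mu_k(w) < \infty$, the random variables $\{\log|z-X_{i,k}|\}_{i\ge 1}$ are i.i.d.\ with mean $U_{\mu_k}(z)$ and finite second moment, so the strong law of large numbers yields $\frac{1}{n}\log|p_{n,k}(z)| \to U_{\mu_k}(z)$ almost surely. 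By Lemma \ref{lemma:U2bound}, this second moment is finite for Lebesgue-a.e.\ $z$, and by Fubini's theorem applied to the product of the probability space and $\mathbb{C}$, the convergence holds almost surely for Lebesgue-a.e.\ $z$. Intersecting the resulting null events over $k = 1,\ldots, m$ and using continuity of $\max$, one obtains, almost surely and for Lebesgue-a.e.\ $z$,
\[ \frac{1}{n}\log\max_{1\le k\le m}|p_{n,k}(z)| \longrightarrow \max_{1\le k\le m} U_{\mu_k}(z). \]

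For the second step, set $K := \supp(\varphi)$ and note that $\Delta\varphi$ is bounded with support in $K$, so it suffices to prove $L^1(K)$ convergence of $\frac{1}{n}\log\max_k|p_{n,k}|$ to $\max_k U_{\mu_k}$. Lemma \ref{lemma:pnqnbounds} furnishes the almost-sure uniform bound
\[ \frac{1}{n^2}\int_K \log^2\Big(\max_{1\le k\le m}|p_{n,k}(z)|\Big) \d \lambda(z) = O_{K,m}(1), \]
and Lemma \ref{lemma:U2bound} shows $\max_k U_{\mu_k} \in L^2(K)$. A uniform $L^2(K)$ bound implies uniform integrability on $K$, so Vitali's convergence theorem applied to the sequence $\frac{1}{n}\log\max_k |p_{n,k}|$ (combining the pointwise-a.e.\ convergence from step one with uniform integrability) yields convergence in $L^1(K)$. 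Multiplying by the bounded function $\Delta\varphi$ and integrating gives the lemma.

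The main technical point is transferring pointwise-a.e.\ convergence to integral convergence in the presence of the logarithmic singularities of $\log|z-X_{i,k}|$ near the random roots; without a uniform sup-norm bound, a naive application of dominated convergence would fail, and the $L^2(K)$ control provided by Lemma \ref{lemma:pnqnbounds} is exactly what enables Vitali's theorem to close the argument. A smaller concern is ensuring the measurability required for Fubini's theorem in step one, but this is routine since the maps $(\omega, z) \mapsto \log|z - X_{i,k}(\omega)|$ are jointly Borel measurable.
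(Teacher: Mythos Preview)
Your proposal is correct and follows essentially the same approach as the paper: pointwise almost-sure convergence via the strong law of large numbers, combined with the uniform $L^2$ bounds from Lemmas~\ref{lemma:U2bound} and~\ref{lemma:pnqnbounds} to upgrade to integral convergence. The only cosmetic difference is that the paper packages the Fubini/Vitali step by invoking the Tao--Vu dominated convergence lemma for random functions (Lemma~3.1 of \cite{TVcirc}, stated here as Lemma~\ref{lemma:dominated}), whereas you carry out the Fubini swap explicitly and then apply Vitali's theorem on the deterministic side.
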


The proof of Lemma \ref{lemma:almostsure} is based on the following dominated convergence result due to Tao and Vu \cite{TVcirc}.  

\begin{lemma}[Dominated convergence; Lemma 3.1 from \cite{TVcirc}] \label{lemma:dominated}
Let $(X, \rho)$ be a finite measure space.  For integers $n \geq 1$, let $f_n: X \to \mathbb{R}$ be random functions which are jointly measurable with respect to $X$ and the underlying probability space.  Assume that:
\begin{enumerate}
\item (uniform integrability) there exists $\delta > 0$ such that $\int_X |f_n(x)|^{1 + \delta} \d \rho(x)$ is bounded in probability (resp., almost surely);
\item (pointwise convergence) for $\rho$-almost ever $x \in X$, $f_n(x)$ converges in probability (resp., almost surely) to zero.
\end{enumerate}
Then $\int_X f_n(x) \d \rho(x)$ converges in probability (resp., almost surely) to zero.  
\end{lemma}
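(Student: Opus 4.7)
The plan is to prove both the in-probability and almost-sure versions by a truncation argument, modeled on the classical Vitali convergence theorem but adapted to the random setting. For $K > 0$, decompose $f_n = g_K(f_n) + h_K(f_n)$, where $g_K(y) := y\,\mathbf{1}_{\{|y|\leq K\}}$ and $h_K(y) := y\,\mathbf{1}_{\{|y|>K\}}$. The truncated part $g_K(f_n)$ is bounded by $K$ deterministically, hence is controllable via bounded convergence, while the tail satisfies
\[
\int_X |h_K(f_n)| \d \rho \leq K^{-\delta} \int_X |f_n|^{1+\delta} \d \rho,
\]
which, by hypothesis (1), can be made uniformly small (in probability, resp.\ almost surely) by taking $K$ sufficiently large. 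The problem therefore reduces to showing $\int_X g_K(f_n) \d \rho \to 0$ for each fixed $K$.

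For the truncated part, hypothesis (2) combined with continuity of $g_K$ at $0$ yields $g_K(f_n(x)) \to 0$ (in probability, resp.\ a.s.) for $\rho$-a.e.\ $x$, with $|g_K(f_n(x))| \leq K$ deterministically. In the in-probability case, bounded convergence in $\omega$ gives $\E|g_K(f_n(x))| \to 0$ for $\rho$-a.e.\ $x$; dominated convergence in $x$ (valid since $\rho$ is finite and the integrand is bounded by $K$), combined with Fubini, then yields $\E \int_X |g_K(f_n)| \d \rho \to 0$, hence convergence of $\int_X g_K(f_n) \d \rho$ to $0$ in probability. In the a.s.\ case, Fubini first upgrades ``for $\rho$-a.e.\ $x$, $\mathbb{P}(f_n(x)\to 0) = 1$'' to ``with probability one, $f_n(x) \to 0$ for $\rho$-a.e.\ $x$'', after which the deterministic dominated convergence theorem applied pathwise gives $\int_X g_K(f_n) \d \rho \to 0$ almost surely.

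To assemble the pieces in the in-probability case, fix $\eps,\eta > 0$. By hypothesis (1) choose $M$ with $\mathbb{P}\bigl(\int_X |f_n|^{1+\delta} \d \rho > M\bigr) < \eta$ for all $n$, then pick $K$ large enough that $K^{-\delta} M < \eps/2$. On the high-probability event the tail contribution to $|\int_X f_n \d \rho|$ is at most $\eps/2$, while for this fixed $K$ the truncated integral converges to $0$ in probability by the previous paragraph. Hence $\limsup_n \mathbb{P}(|\int_X f_n \d \rho| > \eps) \leq \eta$, and since $\eta$ was arbitrary, $\int_X f_n \d \rho \to 0$ in probability. The almost-sure case proceeds analogously, restricted to the full-probability event on which both hypothesis (1) and the Fubini upgrade hold, reducing matters to the deterministic Vitali theorem.

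The main obstacle is the careful interchange of limits: hypothesis (2) is a pointwise-in-$x$ statement about convergence in probability (resp.\ a.s.) in $\omega$, and converting it into an integrated statement requires joint measurability of $f_n$ plus Fubini applied to the bounded truncated integrand. Once these interchanges are justified, the rest follows the classical Vitali scheme. The only genuinely probabilistic subtlety is that hypothesis (1) is stated as boundedness in probability rather than in expectation, which is precisely why the $\eta$-$K$ step is needed rather than a direct application of Markov's inequality.
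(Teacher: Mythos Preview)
The paper does not supply its own proof of this lemma: it is quoted verbatim as Lemma 3.1 from \cite{TVcirc} and used as a black box. So there is nothing in the paper to compare your argument against.

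That said, your proof is correct and is essentially the standard Vitali-type argument. The truncation $f_n = g_K(f_n) + h_K(f_n)$ and the tail estimate $\int_X |h_K(f_n)|\,d\rho \le K^{-\delta}\int_X |f_n|^{1+\delta}\,d\rho$ are exactly what is needed, and you handle both the in-probability and almost-sure cases carefully. In particular, the two points that require attention in the random setting---(i) converting convergence in probability of $g_K(f_n(x))$ to $L^1(\mathbb{P})$ convergence via boundedness, then interchanging the $x$- and $\omega$-integrals by Fubini; and (ii) for the almost-sure case, upgrading ``for $\rho$-a.e.\ $x$, $\mathbb{P}(f_n(x)\to 0)=1$'' to ``$\mathbb{P}$-a.s., $f_n(x)\to 0$ for $\rho$-a.e.\ $x$'' via Fubini on the jointly measurable set $\{(x,\omega): f_n(x,\omega)\to 0\}$---are both correctly identified and dealt with. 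The final $\eta$--$K$ assembly step to accommodate boundedness in probability (rather than in expectation) is also handled properly.
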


\begin{proof}[Proof of Lemma \ref{lemma:almostsure}]
We will prove the lemma by applying Lemma \ref{lemma:dominated} with
\[ f_n(z) := \Delta \varphi(z) \left( \frac{1}{n} \log \left( \max_{1 \leq k \leq m } |p_{n,k}(z)| \right) - \max_{1 \leq k \leq m} U_{\mu_k}(z)  \right). \]
Assuming $\varphi$ is supported on $B(M)$ for $M > 0$ sufficiently large, it will suffice to consider the finite measure space $B(M)$ with the Lebesgue measure $\lambda$ when applying Lemma \ref{lemma:dominated}.  

First observe that since $\log | \cdot |$ is locally integrable on $\mathbb{C}$, Fubini's theorem implies that $U_{\mu_1}, \ldots, U_{\mu_m}$ are finite Lebesgue almost everywhere.  
Thus, by the law of large numbers, for Lebesgue almost every $z \in \mathbb{C}$, 
\[ \frac{1}{n} \log |p_{n,k}(z)| = \frac{1}{n} \sum_{i=1}^n \log |z - X_{i,k}| \longrightarrow U_{\mu_k}(z) \]
almost surely as $n \to \infty$ for $1 \leq k \leq m$.  It follows that, for Lebesgue almost every $z \in \mathbb{C}$, $f_n(z) \to 0$ almost surely as $n \to \infty$.  

In view of Lemma \ref{lemma:dominated} it remains to show that 
\[ \frac{1}{n^2} \int_{B(M)} | \Delta \varphi(z)|^2 \log^2 \left( \max_{1 \leq k \leq m} |p_{n,k}(z)| \right) \d \lambda(z) \]
and
\[ \int_{B(M)}  | \Delta \varphi(z)|^2 \left( \max_{1 \leq k \leq m} U^2_{\mu_k}(z) \right) \d \lambda(z) \]
are bounded almost surely.  Since $\varphi$ is smooth and compactly supported, $\Delta \varphi$ can be bounded in $L^\infty$-norm.  Therefore, it suffices to show that
\begin{equation} \label{eq:intbndpnqn}
	\frac{1}{n^2} \int_{B(M)} \log^2 \left( \max_{1 \leq k \leq m} |p_{n,k}(z)|\right) \d \lambda(z)
\end{equation}
and
\begin{equation} \label{eq:intbndU2}
	\int_{B(M)} \max_{1 \leq k \leq m } U^2_{\mu_k}(z) \d \lambda(z)
\end{equation}
are bounded almost surely.  A bound for \eqref{eq:intbndpnqn} follows from Lemma \ref{lemma:pnqnbounds}.  The bound for \eqref{eq:intbndU2} can be deduced from Lemma \ref{lemma:U2bound}.  
\end{proof}

In view of Lemma \ref{lemma:almostsure}, in order to prove Theorem \ref{thm:main}, it suffices to show that
\[  \frac{1}{n} \sum_{i=1}^n \varphi(z_i^{(n)}) - \frac{1}{2\pi n} \int_{\mathbb{C}} \Delta \varphi(z) \log \left( \max_{1 \leq k \leq m} |p_{n,k}(z)| \right) \d \lambda(z) \longrightarrow 0 \]
in probability as $n \to \infty$,  where $z_1^{(n)}, \ldots, z_n^{(n)}$ are the zeros of the sum $\sum_{k=1}^m p_{n,k}$.  We now exploit the following formula (see, for instance, Section 2.4.1 from \cite{HKYV}), that for any smooth and compactly supported function $\varphi:\mathbb{C} \to \mathbb{C}$, 
\[ \sum_{i=1}^n \varphi(z_i^{(n)}) = \frac{1}{2\pi} \int_{\mathbb{C}} \Delta \varphi(z) \log \left| \sum_{k=1}^m p_{n,k}(z) \right| \d \lambda(z). \]
Therefore, in order to complete the proof of Theorem \ref{thm:main}, it suffices to establish the following asymptotic result.  

\begin{lemma} \label{lemma:main}
Under the assumptions of Theorem \ref{thm:main}, 
\[  \frac{1}{n} \int_{\mathbb{C}} \Delta \varphi(z) \log \left| \sum_{k=1}^m p_{n,k}(z) \right| \d \lambda(z) - \frac{1}{n} \int_{\mathbb{C}} \Delta \varphi(z) \log \left( \max_{1 \leq k \leq m} |p_{n,k}(z)| \right) \d \lambda(z) \longrightarrow 0 \]
in probability as $n \to \infty$.  
\end{lemma}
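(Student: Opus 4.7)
The plan is to compare $\log\left|\sum_k p_{n,k}(z)\right|$ with $\log M(z)$, where $M(z) := \max_{1 \leq k \leq m}|p_{n,k}(z)|$, pointwise in $z$. The key observation is that when no two of the $|p_{n,k}(z)|$ are within a bounded factor of one another, one term dominates the sum and the two logarithms differ by only $O_m(1)$; hypothesis \eqref{eq:mainassump} bounds the Lebesgue measure of the ``bad'' set where this fails, and the $L^2$ bounds from Lemmas \ref{lemma:logsumtight} and \ref{lemma:pnqnbounds} will control the residual contribution from the bad set after a Cauchy--Schwarz estimate.

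Set
\[ B_n := \left\{ z \in \supp \varphi : \exists\, k \neq l,\ \tfrac{1}{2} \leq \left|\tfrac{p_{n,k}(z)}{p_{n,l}(z)}\right| \leq 2 \right\}, \qquad G_n := \supp(\varphi) \setminus B_n. \]
For $z \in G_n$, order the indices so that $|p_{n,\sigma(1)}(z)| \geq \cdots \geq |p_{n,\sigma(m)}(z)|$; all consecutive ratios then strictly exceed $2$, so $|p_{n,\sigma(j)}(z)| \leq 2^{-(j-1)} M(z)$, and the reverse triangle inequality yields
\[ \tfrac{1}{2^{m-1}} M(z) \;\leq\; \left| \sum_{k=1}^m p_{n,k}(z) \right| \;\leq\; m M(z). \]
Hence the integrand difference $f(z) := \log\left|\sum_k p_{n,k}(z)\right| - \log M(z)$ satisfies $|f(z)| \leq (m-1)\log 2 + \log m = O_m(1)$ on $G_n$, so its contribution to the quantity in the lemma is at most $\|\Delta \varphi\|_\infty \lambda(\supp \varphi)\, O_m(1)/n = o(1)$ deterministically.

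For the bad set, Cauchy--Schwarz gives
\[ \frac{1}{n}\int_{B_n} |\Delta\varphi(z)|\,|f(z)|\, d\lambda(z) \;\leq\; \frac{\|\Delta\varphi\|_\infty}{n}\, \lambda(B_n)^{1/2}\left(\int_{\supp \varphi} f(z)^2\, d\lambda(z)\right)^{1/2}. \]
Using $f^2 \leq 2\log^2\left|\sum_k p_{n,k}\right| + 2\log^2 M$ with Lemmas \ref{lemma:logsumtight} and \ref{lemma:pnqnbounds}, the $L^2$ factor is $O(n \log n)$ almost surely. By Fubini and \eqref{eq:mainassump},
\[ \E[\lambda(B_n)] = \int_{\supp \varphi} \Prob(z \in B_n)\, d\lambda(z) = \lambda(\supp\varphi)\cdot O(1/\log^3 n), \]
so Markov's inequality yields $\lambda(B_n) = O(\log^{-5/2} n)$ with probability tending to $1$, making the Cauchy--Schwarz bound $O(\log n / \log^{5/4} n) = O(\log^{-1/4} n) \to 0$ in probability.

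The main obstacle is the deterministic pointwise dominance estimate $\left|\sum_k p_{n,k}(z)\right| \geq M(z)/2^{m-1}$ on $G_n$: without controlling cancellation in this way, the sum could be arbitrarily small even when all individual magnitudes are moderate, and $\log\left|\sum_k p_{n,k}(z)\right|$ could wander far below $\log M(z)$. The remaining quantitative calibration is routine but tight: the $\log n$ loss in the $L^2$ bound of Lemma \ref{lemma:logsumtight} is compensated precisely by the $\log^{-3} n$ bound in \eqref{eq:mainassump} after the square root from Cauchy--Schwarz and a Markov slack of $\log^\delta n$ for some $0 < \delta < 1/2$.
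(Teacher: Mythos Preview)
Your argument is correct and arrives at the same conclusion, but it proceeds along a different route from the paper.  The paper applies a Monte Carlo sampling lemma of Tao--Vu: it draws $l=\lceil(\log n)^{5/2}\rceil$ independent points $Z_1,\ldots,Z_l$ uniformly from $\supp\varphi$, uses the $L^2$ bounds (your Lemmas \ref{lemma:logsumtight} and \ref{lemma:pnqnbounds}) to control the deviation of the integral from the empirical average $\frac{1}{l}\sum_j F_n(Z_j)$, and then shows the empirical average tends to zero by combining the pointwise estimate of Lemma \ref{lemma:sumtomax} with a union bound over the $l$ samples.  You instead split $\supp\varphi$ deterministically into a good set where that same pointwise estimate applies and a bad set $B_n$; you control the bad set via Cauchy--Schwarz (again using the $L^2$ bounds) together with a Markov estimate for $\lambda(B_n)$, which follows from \eqref{eq:mainassump} and Fubini.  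Your approach avoids the auxiliary sampling step and is more self-contained; the paper's approach packages the ``most $z$ are good'' information probabilistically rather than geometrically.  Quantitatively both arguments consume the hypothesis \eqref{eq:mainassump} in the same way, losing a $\log n$ from Lemma \ref{lemma:logsumtight} and a square root from the $L^2$ step, which is why the exponent~$3$ in \eqref{eq:mainassump} is exactly what is needed in either proof.
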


The rest of this section is devoted to the proof of Lemma \ref{lemma:main}.  

\subsection{Proof of Lemma \ref{lemma:main}}

The following lemma will justify the comparison of the logarithm of the sum with the logarithm of the maximum in Lemma \ref{lemma:main}.

\begin{lemma} \label{lemma:sumtomax}
Let $Z$ be a random variable, uniformly distributed on the support of $\varphi$, independent of $X_{i,k}$, $1\leq k \leq m$, $i \geq 1$.  Then, under the assumptions of Theorem \ref{thm:main}, 
\[ \frac{1}{n} \log \left| \sum_{k=1}^m p_{n,k}(Z) \right| = \frac{1}{n} \log \left( \max_{1 \leq k \leq m} |p_{n,k}(Z)| \right) + O_m \left( \frac{1}{n} \right) \]
with probability $1 - O_{\varphi} ( (\log n)^{-3} )$.  
\end{lemma}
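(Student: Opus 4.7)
The plan is to exploit condition \eqref{eq:mainassump} to argue that, with high probability (over $Z$ and the roots), the values $|p_{n,1}(Z)|, \ldots, |p_{n,m}(Z)|$ are geometrically separated, so the sum is comparable (up to a multiplicative $O_m(1)$ factor) to its largest summand.

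Let $\mathcal{E}_n$ denote the event inside the probability in \eqref{eq:mainassump}, i.e.\ the event that there exist $k \neq l$ with $|p_{n,k}(Z)/p_{n,l}(Z)| \in [\tfrac12, 2]$. By hypothesis, $\Prob(\mathcal{E}_n) = O_\varphi((\log n)^{-3})$, so it suffices to work on $\mathcal{E}_n^c$. On this event, let $\sigma$ be a (random) permutation of $\{1,\ldots,m\}$ such that
\[ |p_{n,\sigma(1)}(Z)| \geq |p_{n,\sigma(2)}(Z)| \geq \cdots \geq |p_{n,\sigma(m)}(Z)|. \]
Since for every adjacent pair the ratio $|p_{n,\sigma(i+1)}(Z)|/|p_{n,\sigma(i)}(Z)|$ lies in $[0,1]$ but (by $\mathcal{E}_n^c$) cannot lie in $[\tfrac12,2]$, it must be strictly less than $\tfrac12$. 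Iterating, one obtains the geometric bound $|p_{n,\sigma(i)}(Z)| \leq 2^{-(i-1)}|p_{n,\sigma(1)}(Z)|$ for $1 \leq i \leq m$.

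The upper bound is immediate from the triangle inequality:
\[ \left|\sum_{k=1}^m p_{n,k}(Z)\right| \leq \sum_{k=1}^m |p_{n,k}(Z)| \leq m \,|p_{n,\sigma(1)}(Z)|. \]
For the lower bound, combine the reverse triangle inequality with the geometric decay:
\[ \left|\sum_{k=1}^m p_{n,k}(Z)\right| \geq |p_{n,\sigma(1)}(Z)| - \sum_{i=2}^m |p_{n,\sigma(i)}(Z)| \geq |p_{n,\sigma(1)}(Z)| \left(1 - \sum_{i=2}^m 2^{-(i-1)}\right) = 2^{-(m-1)} |p_{n,\sigma(1)}(Z)|. \]
Both bounds hold deterministically on $\mathcal{E}_n^c$. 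Taking logarithms yields
\[ -(m-1)\log 2 \leq \log\left|\sum_{k=1}^m p_{n,k}(Z)\right| - \log\!\left(\max_{1 \leq k \leq m}|p_{n,k}(Z)|\right) \leq \log m, \]
so dividing by $n$ gives the claimed $O_m(1/n)$ discrepancy on an event of probability at least $1 - O_\varphi((\log n)^{-3})$.

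There is no serious obstacle here — the single substantive input is the hypothesis \eqref{eq:mainassump}, whose role is precisely to rule out the ``near-cancellation'' regime in which different $p_{n,k}(Z)$ have comparable magnitude and could interfere destructively. The only minor point to verify is that the argument handles the degenerate case where some $p_{n,k}(Z) = 0$: then the ratio $|p_{n,k}(Z)/p_{n,l}(Z)|$ is $0$, which lies outside $[\tfrac12,2]$, so such indices cause no issue and in fact correspond to $\sigma(i)$ with large $i$ in the sorted order.
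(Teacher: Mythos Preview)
Your proof is correct and follows essentially the same approach as the paper's: both restrict to the complement of the event in \eqref{eq:mainassump}, sort the magnitudes via a permutation, deduce the geometric separation $|p_{n,\sigma(i)}(Z)| \leq 2^{-(i-1)}|p_{n,\sigma(1)}(Z)|$, and conclude that the sum differs from its largest term by a multiplicative $O_m(1)$ factor. The only cosmetic difference is that the paper factors out $p_{n,\pi(1)}(Z)$ and bounds $\log\bigl|1+\sum_{j\geq 2}p_{n,\pi(j)}(Z)/p_{n,\pi(1)}(Z)\bigr|$ directly, whereas you obtain the explicit constants $m$ and $2^{-(m-1)}$ via the triangle and reverse triangle inequalities; the paper also disposes of the degenerate case $p_{n,k}(Z)=0$ by noting it has probability zero (since $Z$ is continuously distributed and independent of the roots), which is slightly cleaner than your ratio argument but equivalent in effect.
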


We delay the proof of Lemma \ref{lemma:sumtomax} until Section \ref{sec:proofsumtomax}. 
We now turn to the proof of Lemma \ref{lemma:main} which is based on the following Monte Carlo sampling result from \cite{TV}.  

\begin{lemma}[Monte Carlo sampling lemma; Lemma 6.1 from \cite{TV}] \label{lemma:monte}
Let $(X, \rho)$ be a probability space, and let $F: X \to \mathbb{C}$ be a square integrable function.  Let $l \geq 1$, let $Z_1, \ldots, Z_l$ be drawn independently at random from $X$ with distribution $\rho$, and let $S$ be the empirical average
\[ S := \frac{1}{l} ( F(Z_1) + \cdots + F(Z_l) ). \]
Then, for any $\delta > 0$, one has the bound
\[ \left| S - \int_X F \d \rho \right| \leq \frac{1}{\sqrt{l \delta} } \left(  \int_X \left| F - \int_X F \d \rho \right|^2 \d \rho \right)^{1/2} \]
with probability at least $1 - \delta$.   
\end{lemma}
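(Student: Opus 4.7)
The plan is the standard one for Monte Carlo estimates: recognize $S$ as the empirical mean of $l$ i.i.d.\ copies of $F(Z_1)$, compute the variance of that mean using independence, and then apply Markov's inequality to the nonnegative random variable $|S - \E S|^2$. Set $\mu := \int_X F \d \rho$ and $\sigma^2 := \int_X |F - \mu|^2 \d \rho$; square integrability of $F$ guarantees $\sigma^2 < \infty$, and linearity of expectation gives $\E S = \mu$.

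Next I would expand
\[ S - \mu = \frac{1}{l} \sum_{i=1}^l \bigl( F(Z_i) - \mu \bigr) \]
and compute the second moment of this complex-valued random variable using $|w|^2 = w \bar w$. Writing $W_i := F(Z_i) - \mu$, the $W_i$ are i.i.d.\ with $\E W_i = 0$ and $\E |W_i|^2 = \sigma^2$, so independence gives $\E[W_i \overline{W_j}] = \E W_i \cdot \E \overline{W_j} = 0$ for $i \neq j$. Hence
\[ \E |S - \mu|^2 = \frac{1}{l^2} \sum_{i,j=1}^l \E\bigl[ W_i \overline{W_j} \bigr] = \frac{1}{l^2} \sum_{i=1}^l \E|W_i|^2 = \frac{\sigma^2}{l}. \]

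Finally I would apply Markov's inequality to $|S - \mu|^2$: for any $t > 0$,
\[ \Prob \bigl( |S - \mu| > t \bigr) = \Prob \bigl( |S - \mu|^2 > t^2 \bigr) \leq \frac{\E|S - \mu|^2}{t^2} = \frac{\sigma^2}{l t^2}. \]
Choosing $t := \sigma / \sqrt{l \delta}$ makes the right-hand side equal to $\delta$, so the complementary event $|S - \mu| \leq \sigma / \sqrt{l \delta}$ holds with probability at least $1 - \delta$, which is exactly the stated bound. There is no real obstacle here: the lemma is essentially Chebyshev's inequality for the empirical mean of i.i.d.\ random variables. The only minor point that requires attention is that $F$ is complex-valued, but this is handled transparently by working with $|w|^2 = w \bar w$ so that independence kills the off-diagonal cross terms in the variance calculation.
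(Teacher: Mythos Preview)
Your proof is correct and is exactly the standard Chebyshev argument one would expect for this statement. Note that the paper does not give its own proof of this lemma at all---it is quoted verbatim as Lemma~6.1 from Tao--Vu~\cite{TV}---so there is nothing to compare against; your write-up simply supplies the (short, routine) argument that the paper omits.
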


\begin{proof}[Proof of Lemma \ref{lemma:main}]
Let
\[ F_n(z) := \frac{1}{n} \Delta \varphi(z) \left[ \log \left| \sum_{k=1}^m p_{n,k}(z) \right| - \log \left( \max_{1 \leq k \leq m} |p_{n,k}(z)| \right) \right]. \]
Take $K := \supp \varphi$, and choose $M > 0$ sufficiently large so that $K \subset B(M)$.  Our goal is to show that
\[ \int_{K} F_n(z) \d \lambda(z) \longrightarrow 0 \]
in probability as $n \to \infty$.  To do so we will apply the Monte Carlo sampling lemma (Lemma \ref{lemma:monte}) to $F_n$ with $\rho$ being the uniform probability distribution on $K$.  

We first note that
\begin{align*}
	\int_{\mathbb{C}} |F_n|^2 \d \rho &= \frac{1}{\lambda(K)} \int_{K} |F_n(z)|^2 \d \lambda(z) \\
	&\ll_K \frac{1}{n^2} \| \Delta \varphi \|^2_{\infty} \int_{B(M)} \left( \log^2 \left| \sum_{k=1}^m p_{n,k}(z) \right| + \log^2 \left( \max_{1 \leq k \leq m} |p_{n,k}(z)| \right) \right) \d \lambda(z),
\end{align*}
where $\| \Delta \varphi \|_{\infty}$ denotes the $L^\infty$-norm of $\Delta \varphi$.  
The integral can be bounded using Lemmas \ref{lemma:pnqnbounds} and \ref{lemma:logsumtight} to obtain
\begin{equation} \label{eq:varbound}
	\int_{\mathbb{C}} |F_n|^2 \d \rho = O_{m, \varphi}(\log^2 n) 
\end{equation}
with probability $1$.  
We are now in a position to apply the Monte Carlo sampling lemma.  Take $l := \lceil (\log n)^{5/2} \rceil$.  Let $Z_1, \ldots, Z_l$ be independent and identically distributed (iid) random variables with distribution $\rho$ (i.e., $Z_1, \ldots, Z_l$ are uniformly distributed on $K$), $\delta := (\log n)^{-1/4}$, and 
\[ S_n := \frac{1}{l} \sum_{j=1}^l F_n(Z_j). \]
Then Lemma \ref{lemma:monte} and \eqref{eq:varbound} imply that 
\[ \left| S_n - \frac{1}{\lambda(K)} \int_{K} F_n(z) \d \lambda(z) \right| = O_{m, \varphi}( (\log n)^{-1/8}) \]
with probability at least $1 - (\log n)^{-1/4}$.  We conclude that
\[ S_n - \frac{1}{\lambda(K)} \int_{K} F_n(z) \d \lambda(z) \longrightarrow 0 \]
in probability as $n \to \infty$.  Thus, it remains to show that $S_n \to 0$ in probability.  However this follows from Lemma \ref{lemma:sumtomax}.  Indeed, Lemma \ref{lemma:sumtomax} and the union bound give 
\[ \sup_{1 \leq j \leq l} |F_n(Z_j)| = O_{m, \varphi} \left( \frac{1}{n} \right) \]
with probability $1 - O_{\varphi}((\log n)^{-1/2})$.  This immediately implies that the empirical average $S_n$ converges to zero in probability as $n \to \infty$.  
\end{proof}

\subsection{Proof of Lemma \ref{lemma:sumtomax}} \label{sec:proofsumtomax}

We conclude this section with the proof of Lemma \ref{lemma:sumtomax}.

\begin{proof}[Proof of Lemma \ref{lemma:sumtomax}]
Define the event 
\[ \Omega := \bigcup_{k \neq l} \left\{ \frac{1}{2} \leq \left| \frac{ p_{n,k}(Z) }{ p_{n,l}(Z)} \right| \leq 2 \right\}, \]
which appears on the left-hand side of \eqref{eq:mainassump}.  Since $Z$ is continuously distributed, independent of $p_{n,k}$, with probability $1$, $p_{n,k}(Z) \neq 0$ for $1 \leq k \leq m$; this implies that $\Omega$ is well-defined except on events which hold with probability zero, which we safely ignore for the remainder of the proof.  

In view of assumption \eqref{eq:mainassump}, it suffices to prove the result on $\Omega^c$.  
Let $\pi: \{1, \ldots, m\} \to \{1, \ldots, m\}$ be a (random) bijection which orders the values $|p_{n,k}(Z)|$ in decreasing order: 
\[ |p_{n,\pi(1)}(Z)| \geq |p_{n, \pi(2)}(Z)| \geq \cdots \geq |p_{n, \pi(m)}(Z)|. \] 
($\pi$ will in general depend on $Z$ and $X_{i,k}$, $1 \leq k \leq m$, $1 \leq i \leq n$.) By construction, $\max_{1 \leq k \leq m} |p_{n,k}(Z)| = |p_{n, \pi(1)}(Z)|$.

On $\Omega^c$, it follows that
\[ |p_{n,\pi(j)}(Z)| \geq 2 |p_{n, \pi(j+1)}(Z)| \]
for $1 \leq j \leq m-1$, which implies that
\begin{equation} \label{eq:geobnd}
	\left| \frac{ p_{n, \pi(j)}(Z) }{ p_{n, \pi(1)}(Z) } \right| \leq \left( \frac{1}{2} \right)^{j-1}, \quad 1 \leq j \leq m.
\end{equation}
We conclude that, on $\Omega^c$, 
\begin{align*}
	\frac{1}{n} \log \left| \sum_{k=1}^m p_{n,k}(Z) \right| &= \frac{1}{n} \log |p_{n, \pi(1)}(Z)|  + \frac{1}{n} \log \left| 1 + \sum_{j=2}^{m} \frac{ p_{n,\pi(j)}(Z)}{ p_{n, \pi(1)}(Z)} \right| \\
	&= \frac{1}{n} \log \left( \max_{1 \leq k \leq m} |p_{n,k}(Z)| \right) + O_m \left( \frac{1}{n} \right), 
\end{align*}
where the error term is controlled using \eqref{eq:geobnd}.  The proof of the lemma is complete.   
\end{proof}

\section{Proof of main results} \label{sec:sideproofs}

In this section, we prove Theorems \ref{thm:mainsimple2} and \ref{thm:mainsimple} using Theorem \ref{thm:main}.  The key to both proofs is guaranteeing that the assumptions in those theorems imply condition \eqref{eq:mainassump}, which is the purpose of the next two lemmas.  

\begin{lemma} \label{lemma:circles}
Let $K \subset \mathbb{C}$ be a compact set with positive Lebesgue measure.  Let $\mu, \nu \in \mathcal{P}(\mathbb{C})$, and assume $\mu$ is not supported on a circle centered in $K$.  For each $n \geq 1$, define the degree $n$ polynomials 
\[ p_n(z) := \prod_{i=1}^n (z - X_i), \qquad q_n(z) := \prod_{i=1}^n (z - Y_i), \]
where $X_1, Y_1, X_2, Y_2, \ldots$ are independent random variables so that $X_i$ has distribution $\mu$ and $Y_i$ has distribution $\nu$ for each $i \geq 1$.  Then 
\[ \Prob \left( \frac{1}{2} \leq \left| \frac{p_{n}(Z)}{q_{n}(Z)} \right| \leq 2 \right) = O_{K} \left( \frac{1}{ \sqrt{n} } \right), \]
where $Z$ is a random variable, uniformly distributed on $K$, independent of $X_1$, $Y_1$, $X_2$, $Y_2, \ldots$.  
\end{lemma}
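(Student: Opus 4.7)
My plan is to pass to logarithms and reduce the event to an anti-concentration statement for a sum of independent random variables conditional on $Z$. Write the event as $\{|S_n(Z)| \leq \log 2\}$, where
\[ S_n(Z) := \log|p_n(Z)| - \log|q_n(Z)| = \sum_{i=1}^n W_i, \qquad W_i := \log|Z - X_i| - \log|Z - Y_i|. \]
Conditional on $Z = z$, the variables $W_1, \ldots, W_n$ are i.i.d.\ with mean $m(z) := U_\mu(z) - U_\nu(z)$ and conditional variance $\sigma^2(z) := \var(\log|z - X_1|) + \var(\log|z - Y_1|)$. The crucial consequence of the hypothesis that $\mu$ is not supported on a circle centered in $K$ is that, for every $z \in K$, the distribution of $|z - X_1|$ is not a point mass, so $\var(\log|z-X_1|) > 0$ and hence $\sigma(z) > 0$.

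For each $z \in K$, a Berry--Esseen type anti-concentration bound applied conditionally on $Z=z$ yields
\[ \Prob\left(|S_n(Z)| \leq \log 2 \mid Z = z\right) \leq \frac{C}{\sigma(z)\sqrt{n}} + \frac{C\rho(z)}{\sigma^3(z)\sqrt{n}}, \]
where $\rho(z) := \E[|W_1 - m(z)|^3 \mid Z = z]$. Integrating against the uniform distribution of $Z$ on $K$ gives
\[ \Prob\left(\tfrac{1}{2} \leq \left|\tfrac{p_n(Z)}{q_n(Z)}\right| \leq 2\right) \leq \frac{C}{\sqrt{n}\,\lambda(K)} \int_K \left(\frac{1}{\sigma(z)} + \frac{\rho(z)}{\sigma^3(z)}\right) d\lambda(z), \]
so the remaining task is to show the integral on the right is finite. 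Lemma \ref{lemma:U2bound} already provides $L^1(K)$ control on $\int \log^2|z-w| \d\mu(w)$ and similarly for $\nu$, which after a standard truncation argument should tame the third-moment term $\rho(z)/\sigma^3(z)$ once $\sigma$ has been bounded below.

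The principal obstacle is to establish $\int_K \sigma(z)^{-1} d\lambda(z) < \infty$ (and the analogous bound for $\rho/\sigma^3$). Although $\sigma(z) > 0$ pointwise on $K$ by the non-circle hypothesis, integrability requires a quantitative estimate, for instance $\lambda(\{z \in K : \sigma(z) < \epsilon\}) = O(\epsilon^\alpha)$ for some $\alpha > 0$. I expect this from a semicontinuity/compactness argument: the set $\{\sigma^2 < \epsilon\}$ consists of $z$ for which $\mu$ concentrates near some circle around $z$, and because $\mu$ itself is not supported on any such circle, this set must shrink as $\epsilon \to 0$ in a controlled way. If the direct route proves too delicate, an alternative is to replace Berry--Esseen by the Kolmogorov--Rogozin concentration inequality, which produces the same $1/\sqrt{n}$ decay but depends only on the L\'evy concentration function $Q(W_1; 1 \mid Z=z)$, possibly allowing a cleaner integrability argument based on positivity of $1 - Q(W_1;1\mid Z=z)$ for $\lambda$-a.e.\ $z \in K$.
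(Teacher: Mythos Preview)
Your reduction to logarithms and the identification of the problem as anti-concentration for a sum of i.i.d.\ terms are correct, and you rightly sense that Kolmogorov--Rogozin is the natural tool. But the proposal leaves a genuine gap at exactly the point you flag as ``the principal obstacle.'' The Berry--Esseen route is worse than you indicate: for general $\nu\in\mathcal{P}(\mathbb{C})$ the third moment $\rho(z)$ need not be finite at every $z$, and even $\sigma^2(z)$ can be infinite on a null set; Lemma~\ref{lemma:U2bound} gives only integrated second-moment control, not the pointwise moment bounds Berry--Esseen requires. More importantly, you never establish $\int_K \sigma(z)^{-1}\,d\lambda(z)<\infty$, and pointwise positivity of $\sigma$ on a compact set does not by itself yield this without a lower-semicontinuity argument you have not supplied.

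The paper closes the gap with two moves you gesture at but do not carry out. First, it conditions on $q_n$ (which is independent of $p_n$), so one only needs to bound the L\'evy concentration of $\log|p_n(z)|=\sum_i\log|z-X_i|$; this is why only the hypothesis on $\mu$ is used. Second---and this is the missing idea---the compactness argument is run directly on the L\'evy concentration function rather than on $\sigma$: Proposition~\ref{prop:circles} shows that if $\mu$ is not supported on a circle centered in $K$, there exist $\eps_0,p_0\in(0,1/4)$ with $\mathcal{L}(\log|X_1-z|,\eps_0)\le 1-p_0$ \emph{uniformly for all} $z\in K$. Feeding this uniform bound into Kolmogorov--Rogozin gives $\sup_{z\in K}\mathcal{L}(\log|p_n(z)|,\log 2)\le C/\sqrt{n}$ directly, so no integration over $z$ is needed at all. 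Your final sentence aims at ``positivity of $1-Q$ for $\lambda$-a.e.\ $z$'' followed by an integrability argument; the correct target is \emph{uniform} positivity via compactness, after which the supremum over $z$ replaces the integral.
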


\begin{lemma} \label{lemma:gap}
Let $K \subset \mathbb{C}$ be a compact set with positive Lebesgue measure.  Let $\mu, \nu \in \mathcal{P}(\mathbb{C})$, and assume $\mu, \nu$ satisfy Assumption \ref{assump:main} on $K$.  For each $n \geq 1$, define the degree $n$ polynomials 
\[ p_n(z) := \prod_{i=1}^n (z - X_i), \qquad q_n(z) := \prod_{i=1}^n (z - Y_i), \]
where $X_1, Y_1, X_2, Y_2, \ldots$ are independent random variables so that $X_i$ has distribution $\mu$ and $Y_i$ has distribution $\nu$ for each $i \geq 1$.  Then 
\begin{equation} \label{eq:ratio}
	\Prob \left( \frac{1}{2} \leq \left| \frac{p_{n}(Z)}{q_{n}(Z)} \right| \leq 2 \right) = O_{K} \left( \frac{1}{ \log^3 n } \right), 
\end{equation}
where $Z$ is a random variable, uniformly distributed on $K$, independent of $X_1, Y_1, X_2, Y_2, \ldots$.  
\end{lemma}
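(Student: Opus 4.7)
The plan is to reduce the event to a statement purely about logarithmic potentials and then apply Assumption \ref{assump:main}. Observe first that, taking logarithms and dividing by $n$, the event in \eqref{eq:ratio} is equivalent to
\[ \left| \frac{1}{n} \sum_{i=1}^n \log|Z - X_i| - \frac{1}{n} \sum_{i=1}^n \log|Z - Y_i| \right| \leq \frac{\log 2}{n}. \]
So the strategy has two ingredients: (i) show that the two empirical averages concentrate around $U_\mu(Z)$ and $U_\nu(Z)$ respectively, and (ii) use Assumption \ref{assump:main} to say that the set where $U_\mu \approx U_\nu$ on $K$ is small.

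For ingredient (i), condition on $Z = z$. Then $\frac{1}{n} \sum_{i=1}^n \log |z - X_i|$ is a mean of iid random variables with expectation $U_\mu(z)$ and variance $\sigma_\mu^2(z)/n$, where $\sigma_\mu^2(z) \leq \int_\mathbb{C} \log^2|z-w| \d \mu(w)$. By Chebyshev's inequality applied conditionally on $Z$ with threshold $t_n := \log^2 n / \sqrt{n}$,
\[ \Prob\!\left( \left| \tfrac{1}{n} \textstyle\sum_{i=1}^n \log|z - X_i| - U_\mu(z) \right| > t_n \,\Big|\, Z = z \right) \leq \frac{\sigma_\mu^2(z)}{n\, t_n^2} = \frac{\sigma_\mu^2(z)}{\log^4 n}. \]
Averaging over $Z$ uniform on $K$ and applying Lemma \ref{lemma:U2bound} bounds $\E[\sigma_\mu^2(Z)]$ by a constant depending only on $K$ and $\mu$, so the unconditional probability is $O_K(1/\log^4 n)$. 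The analogous statement holds with $X_i, \mu$ replaced by $Y_i, \nu$.

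For ingredient (ii), on the complement of the two deviation events from (i), if $|p_n(Z)/q_n(Z)| \in [1/2, 2]$ then the triangle inequality gives
\[ |U_\mu(Z) - U_\nu(Z)| \leq 2 \frac{\log^2 n}{\sqrt{n}} + \frac{\log 2}{n} \leq 3 \frac{\log^2 n}{\sqrt{n}} \]
for all sufficiently large $n$. Assumption \ref{assump:main} is stated with threshold $\log^2 n / \sqrt{n}$, but since $\log^2 n / \sqrt{n}$ is eventually decreasing, picking $m := \lfloor n/100 \rfloor$ yields $\log^2 m /\sqrt{m} \geq 3 \log^2 n / \sqrt{n}$ for large $n$, hence
\[ \lambda\!\left( \left\{ z \in K : |U_\mu(z) - U_\nu(z)| \leq 3 \tfrac{\log^2 n}{\sqrt n} \right\} \right) \leq \frac{C}{\log^3 m} = O_K\!\left( \frac{1}{\log^3 n}\right). \]
Since $Z$ is uniform on $K$, this upgrades to a probability bound of the same order; adding the $O_K(1/\log^4 n)$ from the two concentration steps yields the lemma.

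The main obstacle is that $\sigma_\mu^2(z)$ is not bounded uniformly in $z$ (it blows up as $z$ approaches the support of $\mu$), so pointwise Chebyshev is useless. The key point is to apply Chebyshev conditionally and then integrate over $Z$ using Lemma \ref{lemma:U2bound}, which controls precisely this integrated second moment. Everything else is bookkeeping and the triangle inequality.
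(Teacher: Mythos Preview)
Your proof is correct and follows essentially the same approach as the paper: a second-moment/Chebyshev concentration bound (the paper packages this as a separate Lemma~\ref{lemma:concentration}, but the content is identical to your conditional Chebyshev plus integration via Lemma~\ref{lemma:U2bound}), followed by the triangle inequality and Assumption~\ref{assump:main}. The only cosmetic difference is the threshold: the paper takes $t_n = \log^{3/2} n/\sqrt{n}$, so that after the triangle inequality one lands directly on $|U_\mu(Z)-U_\nu(Z)| \le \log^2 n/\sqrt{n}$ and Assumption~\ref{assump:main} applies verbatim, whereas your choice $t_n = \log^2 n/\sqrt{n}$ overshoots by a factor of~$3$ and requires the substitution $m = \lfloor n/100\rfloor$ to absorb it.
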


Combining Theorem \ref{thm:main} and Lemma \ref{lemma:genlap} with Lemmas \ref{lemma:circles} and \ref{lemma:gap} completes the proof of Theorems \ref{thm:mainsimple2} and \ref{thm:mainsimple}.  The rest of this section is devoted to the proofs of Lemmas \ref{lemma:circles} and \ref{lemma:gap}.

\subsection{Proof of Lemma \ref{lemma:circles}}

We will need the following definitions and results in order to prove Lemma \ref{lemma:circles}.  

\begin{definition}[Small ball probabilities]
Let $\xi$ be a real-valued random variable.  The \emph{L\'{e}vy concentration function} of $\xi$ is defined as
\[ \Le(\xi,t) := \sup_{u \in \mathbb{R}} \Prob( | \xi  - u | \leq t ) \]
for all $t \geq 0$.  
\end{definition}

The L\'{e}vy concentration function bounds the \emph{small ball probabilities} for $\xi$, which are the probabilities that $\xi$ falls in an interval of length $2t$ on the real line.   We will also exploit the following inequality due to Rogozin \cite{R} (see also Theorem 2.15 in \cite{Petrov}).  

\begin{theorem}[Kolmogorov--Rogozin inequality, \cite{R}] \label{thm:KR}
Let $\xi_1, \ldots, \xi_n$ be independent real-valued random variables, and let $t_1, \ldots, t_n$ be some positive real numbers.  Then for any $t \geq \max_j t_j$ we have
\[ \mathcal{L} \left( \sum_{j=1}^n \xi_j, t \right) \leq C t \left( \sum_{j=1}^n (1 - \mathcal{L}(\xi_j, t_j)) t_j^2 \right)^{-1/2}, \]
where $C > 0$ is an absolute constant.  
\end{theorem}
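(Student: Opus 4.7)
The plan is to prove the inequality by the Fourier-analytic approach originating with Esseen, following the standard textbook treatment (compare \cite{Petrov}). First I would invoke Esseen's concentration function inequality: for any real-valued random variable $S$ with characteristic function $\phi_S$ and any $t > 0$,
\[ \Le(S, t) \leq C_1 t \int_{-1/t}^{1/t} |\phi_S(u)| \d u \]
for an absolute constant $C_1 > 0$. This is proved by smoothing the indicator of an interval of length $2t$ by a nonnegative bump function whose Fourier transform is supported in $[-1/t, 1/t]$ and then applying Parseval's identity; it converts bounding a small-interval probability for $S$ into bounding an $L^1$ integral of $|\phi_S|$ on a short frequency window.

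Next I would apply Esseen's inequality to $S = \sum_{j=1}^n \xi_j$ and use independence to factor $|\phi_S(u)| = \prod_j |\phi_{\xi_j}(u)|$. The heart of the argument is a pointwise upper bound on each factor in terms of $\Le(\xi_j, t_j)$. Let $\tilde{\xi}_j := \xi_j - \xi_j'$ be the symmetrization, where $\xi_j'$ is an independent copy of $\xi_j$, so that
\[ |\phi_{\xi_j}(u)|^2 = \phi_{\tilde{\xi}_j}(u) = \E \cos(u \tilde{\xi}_j) = 1 - \E\bigl(1 - \cos(u \tilde{\xi}_j)\bigr). \]
Combining the elementary bound $1 - \cos(x) \geq c_0 \min(x^2, 1)$ with the symmetrization estimate $\Prob(|\tilde{\xi}_j| \geq t_j) \geq 1 - \Le(\xi_j, t_j)$ (using $\Le(\tilde{\xi}_j, \cdot) \leq \Le(\xi_j, \cdot)$ for independent sums), one extracts, for $|u| \leq 1/t_j$,
\[ |\phi_{\xi_j}(u)|^2 \leq 1 - c_1 u^2 t_j^2 \bigl(1 - \Le(\xi_j, t_j)\bigr). \]

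Using $1 - x \leq e^{-x}$ and taking square roots, and observing that $[-1/t, 1/t]$ is contained in each $[-1/t_j, 1/t_j]$ since $t \geq \max_j t_j$, the product bound on the frequency window of interest becomes
\[ |\phi_S(u)| \leq \exp\!\left( -\tfrac{c_1}{2} u^2 \sum_{j=1}^n t_j^2 (1 - \Le(\xi_j, t_j)) \right). \]
Plugging this Gaussian-type estimate into Esseen's inequality and extending the integral to all of $\R$ yields
\[ \Le(S, t) \leq C_1 t \int_{\R} e^{-A u^2} \d u = C_1 t \sqrt{\pi/A}, \]
where $A = \tfrac{c_1}{2} \sum_j t_j^2 (1 - \Le(\xi_j, t_j))$, which is precisely the claimed inequality after absorbing numerical factors into $C$.

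The main obstacle is the characteristic function bound relating $|\phi_{\xi_j}(u)|$ to $\Le(\xi_j, t_j)$ through symmetrization. The identity $|\phi_{\xi_j}(u)|^2 = \E\cos(u \tilde{\xi}_j)$ is immediate, but one must carefully exploit $1 - \cos(x) \geq c_0 \min(x^2, 1)$ piecewise --- distinguishing the regime $|u \tilde{\xi}_j| \leq \pi$ from larger values or truncating at a suitable scale --- to extract the correct $u^2 t_j^2$ dependence uniformly for $|u| \leq 1/t_j$, and then relate $\Prob(|\tilde{\xi}_j| \geq t_j)$ back to $1 - \Le(\xi_j, t_j)$ via the monotonicity of the L\'{e}vy concentration function under convolution. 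Once this pointwise estimate is in hand, the remaining Fourier-analytic and Gaussian-integration steps are essentially routine.
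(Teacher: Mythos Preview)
The paper does not prove Theorem \ref{thm:KR}; it is quoted from Rogozin \cite{R} (with a pointer to Theorem 2.15 in Petrov \cite{Petrov}) and used as a black box in the proof of Lemma \ref{lemma:circles}. There is thus no argument in the paper to compare yours against.

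Your outline via Esseen's smoothing inequality and symmetrization is the standard Fourier-analytic route to this result, and the overall architecture is right. There is, however, a genuine gap in the middle step. The inequality $1 - \cos(x) \geq c_0 \min(x^2, 1)$ is false (take $x = 2\pi$), and more to the point, the pointwise bound
\[
|\phi_{\xi_j}(u)|^2 \leq 1 - c_1 u^2 t_j^2 \bigl(1 - \Le(\xi_j, t_j)\bigr) \qquad (|u| \leq 1/t_j)
\]
that you derive from it is also false in general. A concrete counterexample: let $\xi_j$ be uniform on $\{0,\, 2\pi N t_j\}$ for a large integer $N$. Then $\Le(\xi_j, t_j) = 1/2$, yet $|\phi_{\xi_j}(1/t_j)|^2 = \tfrac{1}{2}\bigl(1 + \cos(2\pi N)\bigr) = 1$, so no such inequality can hold with a positive $c_1$. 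Consequently the Gaussian envelope on $|\phi_S(u)|$ is not available pointwise, and the ``truncating at a suitable scale'' remedy you mention does not repair this directly --- the same example defeats any truncation that tries to force a pointwise bound of this shape.

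The correct argument (see Esseen's 1966/1968 papers or the proof of Theorem 2.15 in \cite{Petrov}) works instead with the truncated second moments $\E[\tilde{\xi}_j^2 \oindicator{\{|\tilde{\xi}_j|\leq \pi/|u|\}}]$, for which the pointwise lower bound on $1 - |\phi_{\xi_j}(u)|^2$ \emph{is} valid, and then uses an additional step to relate these quantities back to $t_j^2(1 - \Le(\xi_j, t_j))$; that last step is where the real work hides. Once that is in place, your Gaussian-integration conclusion goes through.
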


The Kolmogorov--Rogozin inequality has previously been used in the study random polynomials, for example, in the work of Kabluchko and Zaporozhets \cite{KZ}.  
The last ingredient in the proof of Lemma \ref{lemma:circles} is the following.  

\begin{proposition} \label{prop:circles}
Let $K \subset \mathbb{C}$ be compact.  Assume $\mu \in \mathcal{P}(\mathbb{C})$, and let $X$ be a random variable with distribution $\mu$.  If $\mu$ is not supported on a circle centered in $K$, then there exist $p_0, \eps_0 \in (0,1/4)$ such that 
\begin{equation} \label{eq:levy}
	\sup_{r \geq 0} \Prob \left(  r e^{-\eps_0} \leq |X - z| \leq r e^{\eps_0} \right) \leq 1 - p_0 
\end{equation}
for all $z \in K$.  
\end{proposition}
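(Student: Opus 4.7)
The plan is to argue by contradiction using the compactness of $K$ together with the compact support of $\mu$. Negating the conclusion, for each integer $n \geq 1$ (taking $\eps_0 = p_0 = 1/n$) there exist $z_n \in K$ and $r_n \geq 0$ so that, writing
\[ A_n := \{w \in \mathbb{C} : r_n e^{-1/n} \leq |w - z_n| \leq r_n e^{1/n}\}, \]
we have $\mu(A_n) > 1 - 1/n$. The goal is to extract a limiting circle centered at some $z_* \in K$ that carries the full mass of $\mu$, contradicting the hypothesis.

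First, by compactness of $K$, I would pass to a subsequence along which $z_n \to z_* \in K$. Next, I rule out $r_n \to \infty$: since $\supp(\mu)$ is compact and the $z_n$ remain bounded, if $r_n \to \infty$ then eventually $r_n e^{-1/n} > \sup_{w \in \supp(\mu)} |w - z_n|$, forcing $A_n \cap \supp(\mu) = \emptyset$ and $\mu(A_n) = 0$, which contradicts $\mu(A_n) > 1 - 1/n$ for large $n$. Hence $r_n$ is bounded, and along a further subsequence $r_n \to r_* \in [0, \infty)$.

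I then show that $\mu$ concentrates on $C_* := \{w : |w - z_*| = r_*\}$ (a single point when $r_* = 0$). For any $\delta > 0$, the triangle inequality together with $z_n \to z_*$, $r_n \to r_*$, and $e^{\pm 1/n} \to 1$ gives that $A_n \subset B_\delta := \{w : \bigl| |w - z_*| - r_* \bigr| \leq \delta\}$ for all sufficiently large $n$. Therefore $\mu(B_\delta) \geq 1 - 1/n$ for all such $n$, so $\mu(B_\delta) = 1$. Since the closed sets $B_\delta$ decrease to $C_*$ as $\delta \downarrow 0$, continuity of measures from above yields $\mu(C_*) = 1$. Thus $\mu$ is supported on a circle (of radius $r_*$, possibly $0$) centered at $z_* \in K$, contradicting the hypothesis.

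The main technical points are verifying that $r_n$ stays bounded, which follows cleanly from compactness of $\supp(\mu)$, and treating the degenerate case $r_* = 0$, which is already absorbed into Definition \ref{def:circle} via $r = 0$ and so causes no separate difficulty. The rest is a routine compactness-plus-continuity argument; no quantitative or anti-concentration input is needed beyond the qualitative non-concentration on circles supplied by the hypothesis.
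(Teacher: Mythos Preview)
Your proof is correct and follows essentially the same approach as the paper's own proof: both argue by contradiction, pass to convergent subsequences of $(z_n)$ and $(r_n)$ using compactness, and then use continuity of measure from above on the shrinking annuli to force $\mu$ onto a circle centered in $K$. The only cosmetic difference is that the paper first observes (before setting up the contradiction) that the supremum over $r \geq 0$ may be restricted to $r \in [0,M]$ by boundedness of $K$ and $\supp(\mu)$, whereas you handle the boundedness of $(r_n)$ inside the contradiction argument; both ways are fine.
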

\begin{proof}
Suppose $K$ is nonempty as the claim is trivial otherwise.  Since $K$ and the support of $\mu$ are bounded, there exists $M > 0$ so that
\[ \Prob( r e^{-\eps_0} \leq |X - z| \leq r e^{\eps_0} ) = 0 \]
for any $r \geq M$, every $z \in K$ and every choice of $\eps_0 \in (0,1/4)$.  It thus suffices to show that there exist $\eps_0, p_0 \in (0,1/4)$ such that
\[ \sup_{0 \leq r \leq M} \Prob( r e^{-\eps_0} \leq |X - z| \leq r e^{\eps_0} ) \leq 1 - p_0 \]
for all $z \in K$.  

Suppose to the contrary that there exists $r_n \in [0, M]$ and $z_n \in K$ so that
\[ \Prob( r_n e^{-1/n} \leq |X - z_n| \leq r_n e^{1/n} ) > 1 - 1/n \]
for all $n > 4$.  By compactness (and by passing to subsequences), we may assume that $r_n \to r \in [0,M]$ and $z_n \to z \in K$.  Let $\eps > 0$.  Then 
\begin{align*}
	|r_n e^{-1/n} - r| < \eps, \qquad |r_n e^{1/n} - r| < \eps, \qquad |z - z_n| < \eps
\end{align*}
for all sufficiently large $n$.  Thus, we find
\begin{align*}
	1 - 1/n &< \Prob ( r_n e^{-1/n} \leq |X - z_n| \leq r_n e^{1/n} ) \\
	&\leq \Prob(r - \eps \leq |X - z_n | \leq r + \eps ) \\
	&\leq \Prob( r - 2 \eps \leq |X - z| \leq r + 2\eps )
\end{align*}
since 
\[ \left| |X - z_n| - |X - z| \right| \leq |z - z_n| < \eps \]
for any value of $X$ by the reverse triangle inequality.  Taking $n \to \infty$, we conclude that
\[ \Prob( r - 2 \eps \leq |X - z| \leq r + 2 \eps ) = 1. \]
Since $\eps > 0$ was arbitrary, we take $\eps \to 0$ and use continuity of measure to deduce that $\Prob ( |X - z| = r) = 1$.  This implies that $\mu$ is supported on the circle $\{w \in \mathbb{C} : |w - z| = r\}$, a contradiction.  
\end{proof}

\begin{proof}[Proof of Lemma \ref{lemma:circles}]
Define the event
\[ \Omega := \left\{ \frac{1}{2} \leq \left| \frac{p_{n}(Z)}{q_{n}(Z)} \right| \leq 2 \right\}. \]
Since $Z$ is continuously distributed, independent of $p_n, q_n$, with probability $1$, $Z$ avoids the atoms of $\mu$ and $\nu$ (which are at most countable).  In other words, with probability $1$, $p_{n}(Z) \neq 0$ and $q_n(Z) \neq 0$; this implies that $\Omega$ is well-defined except on events which hold with probability zero, which we safely ignore for the remainder of the proof.  

By taking logarithms, we find
\[ \Prob(\Omega) = \Prob \left( \left| \log |p_n(Z)| - \log |q_n(Z)| \right| \leq \log (2) \right). \]
By conditioning on $Z$ to not be an atom of $\mu$ or $\nu$, it suffices to show that
\[ \sup_{z \in K'} \Prob \left( \left| \log |p_n(z)| - \log |q_n(z)| \right| \leq \log (2) \right) = O_K (n^{-1/2}), \]
where $K'$ is the set $K$ with the atoms of $\mu$ and $\nu$ removed.  
Lastly, by conditioning on $q_n$ (which is independent of $p_n$), it suffices to show that
\begin{equation} \label{eq:toshowlevy}
	\sup_{z \in K'} \Le( \log |p_n(z)|, \log(2) ) = O_K (n^{-1/2}). 
\end{equation}

To bound the left-hand side of \eqref{eq:toshowlevy} we will apply Theorem \ref{thm:KR} since $\log |p_n(z)| = \sum_{i=1}^n \log |X_i - z|$ is the sum of iid random variables.  To this end, we will need to show the existence of $\eps_0, p_o \in (0,1/4)$ so that
\[ \Le ( \log |X_1 - z|, \eps_0) \leq 1 - p_0 \]
for all $z \in K'$.  
Observe that for any $u \in \mathbb{R}$ and any $z \in K'$
\[ \Prob( | \log |X_1 - z| - u | \leq \eps_0) = \Prob( e^{u - \eps_0} \leq |X_1 - z| \leq e^{u + \eps_0} ). \]
It thus suffices to show that there exist $\eps_0, p_0 \in (0,1/4)$ so that 
\[ \sup_{r \geq 0} \Prob( r e^{-\eps_0} \leq |X_1 - z| \leq r e^{\eps_0} ) \leq 1 - p_0 \]
for all $z \in K$.  The existence of such values of $\eps_0$ and $p_0$ follows from Proposition \ref{prop:circles}.  Applying Theorem \ref{thm:KR}, we conclude that
\[ \sup_{z \in K'} \Le( \log |p_n(z)|, \log(2) ) \leq \frac{ C}{ \eps_0 \sqrt{n p_o} }, \]
where $C > 0$ is an absolute constant.  This establishes \eqref{eq:toshowlevy}, and the proof is complete.  
\end{proof}

\subsection{Proof of Lemma \ref{lemma:gap}}

The proof of Lemma \ref{lemma:gap} is based on the following concentration inequality.

\begin{lemma}[Concentration inequality] \label{lemma:concentration}
Under the assumptions of Lemma \ref{lemma:gap}, there exists a finite constant $C > 0$ (depending only on $K, \mu$ and $\nu$) such that
\begin{equation} \label{eq:conc1}
	\Prob \left( \left| \frac{1}{n} \log |p_n(Z)| - U_{\mu}(Z) \right| \geq t \right) \leq \frac{C}{n t^2} 
\end{equation}
and
\begin{equation} \label{eq:conc2}
	\Prob \left( \left| \frac{1}{n} \log |q_n(Z)| - U_{\nu}(Z) \right| \geq t \right) \leq \frac{C}{n t^2} 
\end{equation}
for every $t > 0$.  
\end{lemma}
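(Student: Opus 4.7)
The plan is to reduce both bounds to Chebyshev's inequality applied to a conditional second moment, and then use Lemma \ref{lemma:U2bound} to control that second moment after averaging over $Z$. Since the two statements are symmetric, I will only describe the argument for \eqref{eq:conc1}; the bound \eqref{eq:conc2} follows by the identical argument applied to $q_n$ and $\nu$.

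First, I would exploit the independence of $Z$ from the $X_i$'s together with the additive structure of the logarithm. Writing
\[ W_n(z) := \frac{1}{n} \log |p_n(z)| - U_\mu(z) = \frac{1}{n} \sum_{i=1}^n \bigl( \log |z - X_i| - U_\mu(z) \bigr), \]
we see that, for each fixed $z$, $W_n(z)$ is an average of $n$ i.i.d.\ mean-zero random variables, since $\E[\log|z-X_1|] = U_\mu(z)$ (finite for $\mu$-a.e.\ $z$, and in particular for Lebesgue-a.e.\ $z \in K$, so this is harmless). Hence
\[ \E \bigl[ W_n(z)^2 \bigr] = \frac{1}{n} \var \bigl( \log |z - X_1| \bigr) \leq \frac{1}{n} \E \bigl[ \log^2 |z - X_1| \bigr]. \]

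Next, I would take expectations over $Z$. Since $Z$ is uniform on $K$ (of finite positive Lebesgue measure) and independent of the $X_i$'s, Fubini's theorem gives
\[ \E \bigl[ W_n(Z)^2 \bigr] \leq \frac{1}{n \lambda(K)} \int_K \int_{\mathbb{C}} \log^2 |z - w| \d \mu(w) \d \lambda(z), \]
and the right-hand integral is finite by Lemma \ref{lemma:U2bound}. Hence there exists a constant $C_0 > 0$, depending only on $K$ and $\mu$, such that $\E[W_n(Z)^2] \leq C_0/n$.

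Finally, I would apply Markov's inequality to $W_n(Z)^2$: for any $t > 0$,
\[ \Prob \bigl( |W_n(Z)| \geq t \bigr) \leq \frac{ \E[ W_n(Z)^2 ]}{t^2} \leq \frac{C_0}{n t^2}, \]
which is exactly \eqref{eq:conc1}. The same three steps, with $q_n$, $Y_i$, and $\nu$ replacing $p_n$, $X_i$, and $\mu$, yield \eqref{eq:conc2}. I do not anticipate a substantive obstacle: the only subtle point is to make sure the conditional-variance argument is phrased in a way that the potential singularity of $\log |z - X_1|$ at $z = X_1$ is absorbed into an integrable quantity, which is precisely what Lemma \ref{lemma:U2bound} guarantees after averaging over $z \in K$.
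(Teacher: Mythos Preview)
Your proposal is correct and follows essentially the same approach as the paper: reduce to a second-moment bound via Markov/Chebyshev, exploit that the summands $\log|z-X_i|-U_\mu(z)$ are (conditionally on $z$) i.i.d.\ with mean zero, and control the resulting integral using Lemma~\ref{lemma:U2bound}. The only cosmetic difference is that the paper works with the unconditional expectation and verifies the cross terms vanish via the tower property, whereas you condition on $z$ first so the i.i.d.\ structure is immediate; both routes yield the same bound.
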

\begin{proof}
We only prove \eqref{eq:conc1} as the proof of \eqref{eq:conc2} is identical.  The proof is based on a second moment bound.  Indeed, by Markov's inequality, 
\[  \Prob \left( \left| \frac{1}{n} \log |p_n(Z)| - U_{\mu}(Z) \right| \geq t \right) \leq \frac{1}{t^2} \E \left| \frac{1}{n} \log |p_n(Z)| - U_\mu(Z) \right|^2, \]
and so it suffices to show that
\begin{equation} \label{eq:concshow}
	\E \left| \frac{1}{n} \log |p_n(Z)| - U_\mu(Z) \right|^2 = \frac{1}{n^2} \E \left| \sum_{i=1}^n (\log |Z - X_i| - U_\mu(Z)) \right|^2 \leq \frac{C}{n}. 
\end{equation}

Observe that $\E \left[ \log |Z - X_i| \mid Z \right] = U_{\mu}(Z)$ almost surely and $\E \log |Z - X_i| = \E U_{\mu}(Z)$.  The last expectation is finite due to Lemma \ref{lemma:U2bound}.  It follows that the terms in the sum
\[ \sum_{i=1}^n (\log |Z - X_i| - U_\mu(Z))  \]
have mean zero.  The terms in this sum are also uncorrelated as can be seen by applying the law of iterated expectations: 
\begin{align*}
	\E &\left[ (\log |Z - X_i| - U_\mu(Z) )(\log |Z-X_j| - U_\mu(Z)) \right] \\
	&\qquad= \E \left[ \E\left[ (\log |Z - X_i| - U_\mu(Z) )(\log |Z-X_j| - U_\mu(Z)) \mid Z \right]  \right] \\
	&\qquad= \E \left[ \E\left[ \log |Z - X_i| - U_\mu(Z)  \mid Z \right] \E \left[ \log |Z-X_j| - U_\mu(Z) \mid Z \right] \right] \\
	&\qquad= 0
\end{align*}
for $i \neq j$.  
Therefore, we conclude that 
\begin{align*} 
	\E \left| \sum_{i=1}^n (\log |Z - X_i| - U_\mu(Z)) \right|^2 &\leq \sum_{i=1}^n \E \left| \log |Z - X_i| - U_{\mu}(Z) \right|^2 \\
	&\leq 2n \left( \E \log^2 |Z - X_1| + \E U^2_{\mu}(Z)  \right). 
\end{align*}
Both expectations on the right-hand side can be bounded by constants using Lemma \ref{lemma:U2bound} since
\[ \E \log^2 |Z - X_1| = \frac{1}{\lambda(K)} \int_{K} \int_{\mathbb{C}} \log^2 |z - w| \d \mu(w) \d \lambda(z) \]
and
\[  \E U^2_{\mu}(Z) = \frac{1}{\lambda(K)} \int_{K} U_{\mu}^2(z) \d \lambda(z). \] 
Combining the bounds above establishes \eqref{eq:concshow}, and the proof of the lemma is complete.  
\end{proof}

\begin{proof}[Proof of Lemma \ref{lemma:gap}]
In view of Lemma \ref{lemma:concentration}, the event 
\[ \Omega := \left\{ \left| \frac{1}{n} \log |p_n(Z)| - U_{\mu}(Z) \right| \leq \frac{\log^{3/2} n}{\sqrt{n}} \right\} \bigcap \left\{ \left| \frac{1}{n} \log |q_n(Z)| - U_{\nu}(Z) \right| \leq \frac{\log^{3/2} n}{\sqrt{n}} \right\}. \]
holds with probability at least $1 - O_{K}( (\log n)^{-3} )$.  Let $\mathcal{E}$ be the event on the left-hand side of \eqref{eq:ratio} whose probability we aim to bound.  
Then
\[ \Prob (\mathcal{E}) \leq \Prob(\mathcal{E} \cap \Omega) + \Prob(\Omega^c) = \Prob(\mathcal{E} \cap \Omega) + O_{K}((\log n)^{-3}), \]
and so it remains to show that $\Prob(\mathcal{E} \cap \Omega) = O_{K}((\log n)^{-3})$.  

On the event $\mathcal{E}$, it follows that
\[ \left| \frac{1}{n} \log |p_n(Z)| - \frac{1}{n} \log |q_n(Z)| \right| \leq \frac{\log 2}{n}. \]
Hence, on the event $\mathcal{E} \cap \Omega$, we obtain via the triangle inequality that 
\begin{align*}
	&\left| U_{\mu}(Z) - U_{\nu}(Z) \right| \\
	&\qquad\leq \left| U_{\mu}(Z) - \frac{1}{n} \log |p_n(Z)| \right| + \left| \frac{1}{n} \log |p_n(Z)| - \frac{1}{n} \log |q_n(Z)| \right| + \left| \frac{1}{n} \log |q_n(Z)| - U_{\nu}(Z) \right| \\
	&\qquad \leq 2 \frac{\log^{3/2} n}{\sqrt{n}} + \frac{\log 2}{n} \\
	&\qquad\leq \frac{\log^2 n}{\sqrt{n}}  
\end{align*}
for all sufficiently large $n$.  In other words, we have shown that
\[ \Prob( \mathcal{E} \cap \Omega) \leq \Prob \left( \left| U_{\mu}(Z) - U_{\nu}(Z) \right| \leq \frac{\log^2 n}{\sqrt{n}} \right) \]
for every sufficiently large $n$.  Since $Z$ is uniformly distributed on $K$
\begin{align*}
	\Prob \left( \left| U_{\mu}(Z) - U_{\nu}(Z) \right| \leq \frac{\log^2 n}{\sqrt{n}} \right) &= \frac{\lambda \left( \left\{ z \in K :  \left| U_{\mu}(z) - U_{\nu}(z) \right| \leq \frac{\log^2 n}{\sqrt{n}} \right\} \right) }{\lambda(K) } \\
	&= O_{K}( (\log n)^{-3}) 
\end{align*}
by Assumption \ref{assump:main}.  We conclude that $\Prob(\mathcal{E} \cap \Omega) = O_{K}((\log n)^{-3})$, and the proof is complete.  
\end{proof}

\appendix

\section{Deterministic tools}

This section contains some useful lemmas we require throughout the paper.  The first is based on a fairly standard argument.  Recall that $\lambda$ denotes Lebesgue measure on $\mathbb{C}$.  
\begin{lemma} \label{lemma:log}
Let $K \subset \mathbb{C}$ be a compact set, and let $p \geq 1$ be an integer.  Then the function $f: \mathbb{C} \to \mathbb{R}$ given by
\[ f(z) := \int_{K} \log^p |z - w| \d \lambda(w) \]
is continuous.
\end{lemma}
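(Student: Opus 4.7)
The plan is to prove continuity of $f$ at an arbitrary point $z_0 \in \mathbb{C}$ by splitting the integral into a ``singular'' part near $z_0$ and a ``regular'' part away from $z_0$, estimating each separately. The only mild complication is the logarithmic singularity of the integrand at $w=z$, but since $|\log|u||^p$ is locally integrable at $0$ in $\mathbb{R}^2$, this is harmless.

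Fix $z_0 \in \mathbb{C}$, and restrict attention to $z$ in the unit disk $B(z_0,1)$. For a parameter $\delta \in (0,1)$ to be chosen, decompose
\[ f(z) = \underbrace{\int_{K \cap B(z_0, 2\delta)} \log^p|z-w| \d \lambda(w)}_{=: f^{\text{in}}_\delta(z)} + \underbrace{\int_{K \setminus B(z_0, 2\delta)} \log^p|z-w| \d \lambda(w)}_{=: f^{\text{out}}_\delta(z)}. \]
For the outer piece, I would observe that if $|z-z_0| < \delta$ and $w \in K \setminus B(z_0,2\delta)$, then $|z-w| \geq \delta$, and since $K$ is bounded, $|z-w|$ is also bounded above by a constant depending only on $K$. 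Hence $\log^p|z-w|$ is a bounded, jointly continuous function of $(z,w)$ on the compact set $\overline{B(z_0,\delta/2)} \times (K \setminus B(z_0,2\delta))$, so it is uniformly continuous there. Standard dominated convergence (or uniform continuity directly) then yields $f^{\text{out}}_\delta(z) \to f^{\text{out}}_\delta(z_0)$ as $z \to z_0$.

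For the inner piece, if $|z-z_0| < \delta$ and $w \in B(z_0, 2\delta)$, then $|z-w| < 3\delta$, so by the translation invariance of Lebesgue measure,
\[ |f^{\text{in}}_\delta(z)| \leq \int_{B(z,3\delta)} |\log|z-w||^p \d \lambda(w) = \int_{B(0,3\delta)} |\log|u||^p \d \lambda(u) =: g(\delta). \]
The key point is that $g(\delta) \to 0$ as $\delta \to 0$, which follows from the local integrability of $|\log|u||^p$ at the origin in $\mathbb{R}^2$ (passing to polar coordinates, $\int_0^{3\delta} |\log r|^p r \d r < \infty$ and tends to $0$ with $\delta$). This bound is uniform in $z \in B(z_0,\delta)$, and applies equally to $z_0$ itself.

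Combining both estimates, for any $\eps > 0$, I would first choose $\delta$ small enough that $2 g(\delta) < \eps/2$, and then choose $z$ close enough to $z_0$ that $|f^{\text{out}}_\delta(z) - f^{\text{out}}_\delta(z_0)| < \eps/2$, giving $|f(z) - f(z_0)| < \eps$. The only step requiring any care is the uniform estimate $g(\delta) \to 0$, which is routine but essential; the rest is a standard application of dominated convergence on the bounded piece.
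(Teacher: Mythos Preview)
Your proof is correct and follows essentially the same strategy as the paper: split the integral into a small disk around the base point and its complement, use local integrability of $|\log|\cdot||^p$ for the inner piece and boundedness plus dominated convergence for the outer piece. The only differences are cosmetic (your choice of radii $2\delta$ and $3\delta$ versus the paper's $\delta$ and $\delta/100$, and your explicit polar-coordinate computation of $g(\delta)$).
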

\begin{proof}
Fix $z \in \mathbb{C}$ and let $\{z_n\}$ be a sequence of complex numbers converging to $z$.  It suffices to show that $\lim_{n \to \infty} f(z_n) = f(z)$.  

Let $\delta > 0$.  We assume $n$ is sufficiently large so that $|z_n - z| < \delta/100$.  Then
\begin{align*}
	|f(z_n) - f(z)| &\leq \int_{B(z, \delta)} \left( \left | \log^p |z_n - w| \right| + \left| \log^p |z - w| \right| \right) \d \lambda(w) \\
	&\qquad\qquad + \int_{K \cap B(z, \delta)^c} \left| \log^p |z_n - w| - \log^p |z - w| \right| \d \lambda(w). 
\end{align*}
On $K \cap B(z, \delta)^c$ the functions $\log^p |z_n - w|$ and $\log^p |z - w|$ are uniformly bounded, and so by the dominated convergence theorem, 
\[ \lim_{n \to \infty} \int_{K \cap B(z, \delta)^c} \left| \log^p |z_n - w| - \log^p |z - w| \right| \d \lambda(w) = 0. \]
The term 
\[ \int_{B(z, \delta)} \left( \left | \log^p |z_n - w| \right| + \left| \log^p |z - w| \right| \right) \d \lambda(w) \]
can be made arbitrarily small (by taking $\delta$ sufficiently small) since the function $\log^p |\cdot|$ is locally integrable.  
\end{proof}

The following deterministic results control the magnitude of the zeros for the sum of polynomials and are based on a bound due to Walsh \cite{Walsh}.  Recall that $B(r)$ denotes the disk of radius $r > 0$ centered at the origin in the complex plane.  

\begin{lemma} \label{lemma:detsum2}
Let $p$ and $q$ be degree $n$ polynomials, and assume that $p$ is monic and $q$ has leading coefficient $\lambda \geq 1$.  If all the zeros of $p$ and $q$ are contained in $B(M)$ for some $M > 0$, then all the zeros of $p+q$ are contained in the disk centered at the origin of radius $2M / \sin(\pi/n)$.  
\end{lemma}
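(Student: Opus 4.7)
The plan is a direct \emph{argument-counting} proof. Assume for contradiction that $\zeta$ is a zero of $p+q$ with $R:=|\zeta|>2M/\sin(\pi/n)$. Using the identity $\sin(\pi/n)=2\sin(\pi/(2n))\cos(\pi/(2n))\le 2\sin(\pi/(2n))$, this hypothesis implies $R>M/\sin(\pi/(2n))\ge M$; in particular $\zeta\neq \alpha_i,\beta_i$ for all $i$, where $\alpha_1,\dots,\alpha_n$ and $\beta_1,\dots,\beta_n$ denote the zeros of $p$ and $q$, respectively. The relation $p(\zeta)+q(\zeta)=0$ rewrites as
\[
\prod_{i=1}^{n}\frac{\zeta-\alpha_i}{\zeta-\beta_i} \;=\; -\lambda,
\]
whose right-hand side (since $\lambda>0$) has argument $\pi$ modulo $2\pi$.

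I would next bound the arguments of the individual factors. For any $w$ with $|w|\le M$, the point $\zeta-w$ lies in the closed disk of radius $M$ centered at $\zeta$, and since $R>M$ this disk is disjoint from the origin. A standard tangent computation then yields $|\arg(\zeta-w)-\arg(\zeta)|\le \arcsin(M/R)$, so by the triangle inequality each principal argument $\theta_i:=\arg\!\left((\zeta-\alpha_i)/(\zeta-\beta_i)\right)$ satisfies $|\theta_i|\le 2\arcsin(M/R)<\pi$. Hence $\sum_{i=1}^n \theta_i$ is an honest real number lying in $[-2n\arcsin(M/R),\,2n\arcsin(M/R)]$, and by the displayed identity this sum is congruent to $\pi$ modulo $2\pi$. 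The closest representatives of $\pi+2\pi\mathbb{Z}$ to $0$ are $\pm\pi$, so we must have $2n\arcsin(M/R)\ge \pi$, giving $R\le M/\sin(\pi/(2n))\le 2M/\sin(\pi/n)$, a contradiction.

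The delicate step is ensuring the bound $|\theta_i|<\pi$ really holds, so that the sum of principal arguments is unambiguously a real number in the stated interval (rather than a value defined only modulo $2\pi$); this rests on the elementary inequality $\arcsin(M/R)<\pi/2$, which is built into our working hypothesis $R>M$. The assumption $\lambda\ge 1$ is used only to guarantee that $p+q$ has degree exactly $n$ (its leading coefficient being $1+\lambda\neq 0$) and that $-\lambda$ is a nonzero real of argument $\pi$; essentially the same proof would apply for any real $\lambda>0$, or, with the obvious modification, for any complex $\lambda$ of known argument.
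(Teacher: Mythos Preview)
Your proof is correct and takes a genuinely different route from the paper's. The paper simply writes $p+q=p-(-\lambda)\tfrac{1}{\lambda}q$ and invokes Walsh's Theorem~IV (equivalently, Theorem~(17,2a) in Marden), which directly yields the radius $\dfrac{M+\lambda^{1/n}M}{\lambda^{1/n}\sin(\pi/n)}\le \dfrac{2M}{\sin(\pi/n)}$. You instead give a self-contained argument-counting proof: from $p(\zeta)+q(\zeta)=0$ you get $\prod_i(\zeta-\alpha_i)/(\zeta-\beta_i)=-\lambda$, bound each factor's argument by $2\arcsin(M/R)$ via the tangent-line estimate, and force $2n\arcsin(M/R)\ge\pi$. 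This is essentially a reconstruction of Walsh's own proof technique specialized to the case at hand. The payoff is that your argument is elementary and needs no external reference, and in fact delivers the slightly sharper bound $R\le M/\sin(\pi/(2n))$ before you relax it to $2M/\sin(\pi/n)$ via the double-angle inequality; the paper's approach is shorter but leans on the cited result. Your remark that only $\lambda>0$ (or a known argument for $-\lambda$) is really needed is also accurate.
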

\begin{proof}
We write $p + q = p - (-\lambda) \frac{1}{\lambda} q$, and note that $p$ and $\frac{1}{\lambda} q$ are monic polynomials.  Moreover, all the zeros of $p$ and $\frac{1}{\lambda} q$ are contained in $B(M)$ by supposition.  
The claim now follows from a bound due to Walsh (see Theorem IV from \cite{Walsh} or Theorem (17,2a) on page 77 of \cite{M}), which (as a special case) guarantees that the roots of $p + q$ are contained in the disk centered at the origin with radius 
\[ \frac{ M + \lambda^{1/n} M}{\lambda^{1/n} \sin(\pi/n)} \leq \frac{2M}{\sin(\pi/n)}. \]
\end{proof}

\begin{lemma} \label{lemma:detsum}
Let $p_1, \ldots, p_m$ be degree $n$ monic polynomials.  If all the zeros of $p_1, \ldots, p_m$ are contained in $B(M)$ for some $M > 0$, then all the zeros of the sum $\sum_{k=1}^m p_k$ are contained in the disk centered at the origin of radius $\frac{2^{m-1} M}{ (\sin(\pi/n))^{m-1}}$.  
\end{lemma}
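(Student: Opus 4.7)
The plan is to proceed by induction on $m$, using Lemma \ref{lemma:detsum2} as both the base case and the engine of the inductive step. The base case $m=1$ is trivial (the stated radius reduces to $M$), and the case $m=2$ is exactly Lemma \ref{lemma:detsum2} applied with $p=p_1$, $q=p_2$, and $\lambda=1$.

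For the inductive step, assume the lemma holds for $m-1$ monic degree $n$ polynomials, and consider monic degree $n$ polynomials $p_1,\ldots,p_m$ with all zeros in $B(M)$. Set $q := \sum_{k=1}^{m-1} p_k$; then $q$ has degree $n$ and leading coefficient $m-1 \geq 1$. By the inductive hypothesis applied to $p_1,\ldots,p_{m-1}$ (which are monic), all zeros of $q$ lie in the disk $B(M')$ where
\[
M' := \frac{2^{m-2} M}{(\sin(\pi/n))^{m-2}} \geq M,
\]
the inequality following since $\sin(\pi/n)\leq 1$ and $2^{m-2}\geq 1$ for $m\geq 2$. In particular, all zeros of $p_m$ also lie in $B(M')$.

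Now write $\sum_{k=1}^m p_k = p_m + q$, where $p_m$ is monic and $q$ has leading coefficient $\lambda = m-1 \geq 1$, so the hypotheses of Lemma \ref{lemma:detsum2} are satisfied with the common bound $M'$. That lemma yields that all zeros of $p_m + q$ lie in the disk of radius
\[
\frac{2 M'}{\sin(\pi/n)} = \frac{2 \cdot 2^{m-2} M}{(\sin(\pi/n))^{m-1}} = \frac{2^{m-1} M}{(\sin(\pi/n))^{m-1}},
\]
which closes the induction.

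The only subtlety — hardly an obstacle — is verifying that Lemma \ref{lemma:detsum2} is applicable at each stage: one must confirm that the partial sum $q$ has leading coefficient at least $1$ (which holds because the $p_k$ are monic and we sum at least one of them), and that $p_m$ and $q$ have their zeros in a common disk (which is achieved by enlarging from $B(M)$ to $B(M')$ using $M' \geq M$). No delicate estimates are needed beyond the geometric doubling that comes out of one invocation of Lemma \ref{lemma:detsum2} per induction step.
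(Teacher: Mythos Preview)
Your proof is correct and follows essentially the same approach as the paper: induction on $m$, with Lemma \ref{lemma:detsum2} supplying both the base case and the inductive step, enlarging the disk from $B(M)$ to $B(M')$ so that both summands have their zeros in a common disk. You are slightly more explicit than the paper in verifying that the partial sum $q=\sum_{k=1}^{m-1}p_k$ has leading coefficient $m-1\geq 1$ so that Lemma \ref{lemma:detsum2} applies, but otherwise the arguments coincide.
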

\begin{proof}
We proceed by induction on $m$.  If $m = 2$ the result follows from Lemma \ref{lemma:detsum2}.  Let $m \geq 3$, and suppose the zeros of $\sum_{k=1}^{m-1} p_k$ are contained in the disk centered at the origin of radius $\frac{ 2^{m-2} M}{ ( \sin(\pi/n))^{m-2} }$.  By assumption the zeros of $p_m$ are contained in 
\[ B(M) \subset B\left(\frac{ 2^{m-2} M}{ ( \sin(\pi/n))^{m-2} } \right). \]
Thus, by Lemma \ref{lemma:detsum2}, the zeros of $\sum_{k=1}^m p_k = \sum_{k=1}^{m-1} p_{k} + p_m$ are contained in the disk centered at the origin of radius $\frac{2^{m-1}M}{(\sin (\pi/n))^{m-1} }$, and the proof is complete.  
\end{proof}

\bibliographystyle{abbrv}
\bibliography{sums}

\end{document}